\theoremstyle{plain}
\newtheorem{theorem}{Theorem}
\newtheorem{lemma}{Lemma}
\newtheorem{definition}{Definition}
\newtheorem{remark}{Remark} 
\newtheorem{assumption}{Assumption}
\newcommand{\matid}{\ensuremath{\mathbf{{I}}}}
\newcommand{\bu}{\ensuremath{\mathbf{u}}}
\newcommand{\by}{\ensuremath{\mathbf{y}}}
\newcommand{\bR}{\ensuremath{\mathbf{R}}}
\newcommand{\bRst}{\ensuremath{{\mathbf{R}^s}^\top}}
\newcommand{\bB}{\ensuremath{\mathbf{B}}}
\newcommand{\bD}{\ensuremath{\mathbf{D}}}
\newcommand{\bM}{\ensuremath{\mathbf{M}}}
\newcommand{\bW}{\ensuremath{\mathbf{W}}}
\newcommand{\bV}{\ensuremath{\mathbf{V}}}
\newcommand{\bY}{\ensuremath{\mathbf{Y}}}
\newcommand{\bPi}{\ensuremath{\boldsymbol{\Pi}}}
\newcommand{\bL}{\ensuremath{\boldsymbol{\Lambda}}}
\newcommand{\bv}{\ensuremath{\mathbf{v}}}
\newcommand{\bx}{\ensuremath{\mathbf{x}}}
\newcommand{\bb}{\ensuremath{\mathbf{b}}}
\newcommand{\bA}{\ensuremath{\mathbf{A}}}
\newcommand{\bH}{\ensuremath{\mathbf{H}}}
\newcommand{\bN}{\ensuremath{\mathbf{N}}}
\newcommand{\bS}{\ensuremath{\mathbf{S}}}
\newcommand{\bz}{\ensuremath{\mathbf{z}}}
\newcommand{\AS}{\ensuremath{^{\mathrm{AS}}}}
\newcommand{\NN}{\ensuremath{^{\mathrm{NN}}}}
\newcommand{\uAS}{\ensuremath{_{\mathrm{AS}}}} 
\newcommand{\uASp}{\ensuremath{_{\mathrm{AS},+}}} 
\newcommand{\uNN}{\ensuremath{_{\mathrm{NN}}}}
\newcommand{\bHtwo}{\ensuremath{\bH_2}} 
\newcommand{\st}{\ensuremath{^{t}}}
\newcommand{\0}{\ensuremath{^{0}}}
\newcommand{\s}{\ensuremath{^{s}}}
\newcommand{\range}{\ensuremath{\operatorname{range}}}
\newcommand{\bphi}{\ensuremath{\boldsymbol{\phi}}}
\newcommand{\soneN}{\ensuremath{s\in\llbracket 1, N \rrbracket}}
\title{Fully algebraic domain decomposition preconditioners with adaptive spectral bounds
}
\author{Lo\"ic Gouarin\footnotemark[2] \and Nicole Spillane\footnotemark[3]}
\begin{document}

\maketitle

\renewcommand{\thefootnote}{\fnsymbol{footnote}}

\footnotetext[2]{CNRS, CMAP, \'Ecole Polytechnique, Institut Polytechnique de Paris, 91128 Palaiseau Cedex, France (\textit{loic.gouarin@cmap.polytechnique.fr})} 
\footnotetext[3]{CNRS, CMAP, \'Ecole Polytechnique, Institut Polytechnique de Paris, 91128 Palaiseau Cedex, France (\textit{nicole.spillane@cmap.polytechnique.fr})} 

\begin{abstract}
In this article a new family of preconditioners is introduced for symmetric positive definite linear systems. The new preconditioners, called the AWG preconditioners (for Algebraic-Woodbury-GenEO) are constructed algebraically. By this, we mean that only the knowledge of the matrix $\bA$ for which the linear system is being solved is required. Thanks to the GenEO spectral coarse space technique, the condition number of the preconditioned operator is bounded theoretically from above. This upper bound can be made smaller by enriching the coarse space with more spectral modes. 

The novelty is that, unlike in previous work on the GenEO coarse spaces, no knowledge of a partially non-assembled form of $\bA$ is required. Indeed, the spectral coarse space technique is not applied directly to $\bA$ but to a low-rank modification of $\bA$ of which a suitable non-assembled form is known by construction. The extra cost is a second (and to this day rather expensive) coarse solve in the preconditioner. One of the AWG preconditioners has already been presented in the short preprint \cite{spillane:hal-03187092}. This article is the first full presentation of the larger family of AWG preconditioners. It includes proofs of the spectral bounds as well as numerical illustrations. 
\end{abstract}
%
\pagestyle{myheadings}
\markboth{Lo\"ic GOUARIN, Nicole SPILLANE}{L. Gouarin and N. Spillane. Fully algebraic DD with adaptive spectral bounds}
\textbf{Keywords:}
preconditioner, domain decomposition, coarse space, algebraic, linear system, Woodbury matrix identity
%

\noindent\textbf{AMS classification:}
65F10, 65N30, 65N55

\tableofcontents

%

\section{Introduction}

Throughout this article we consider the problem of finding $\bx_* \in \mathbb R^n$ that is the solution of the following linear system
\begin{equation}
\bA \bx_* = \mathbf{b}, \text{ where $\bA \in \mathbb R^{n\times n} $ is symmetric positive and definite (spd),} 
\label{eq:Ax=b}
\end{equation}
for a given right hand side $\mathbf{b} \in \mathbb R^n$. 

The applications we have in mind are ones for which $\bA$ is sparse and the number $n$ of unknowns is very large. Hence, we study parallel solvers and more specifically preconditioners for the preconditioned conjugate gradient (PCG) method \cite{saad2003iterative}[Section 9.2]. Our objective is to propose a new preconditioner $\bH$ for solving \eqref{eq:Ax=b} in such a way that the condition number of $\bH \bA$ is bounded from above by a \textit{small enough} constant chosen by the user. This guarantees that the PCG method will converge in \textit{sufficiently few} iterations \cite{ToselliWidlund_book2005}[Lemma C.10]. Two-level domain decomposition preconditioners already exist that satisfy such a nice property (specifically the spectral coarse space methods described below). These methods, however, rely on the knowledge of some partially unassembled form of matrix $\bA$. The additional challenge that we tackle in this work is that the new preconditioner must rely only on the knowledge of the matrix $\bA$, and this is the meaning of the word \textit{algebraic} as it is in Algebraic Multigrid (AMG) \cite{brandt1984algebraic,ruge1987algebraic,xu2017algebraic}. 

Very generally speaking, domain decomposition methods partition the domain $\Omega$ in which the solution is sought into smaller spaces $\Omega\s$, indexed by $s\in \llbracket 1,N \rrbracket$, and characterized by prolongation matrices ${\bR\s}^\top$ that satisfy $\sum_{s=1}^N \operatorname{range}({\bR\s}^\top) = \Omega$. One-level domain decomposition preconditioners then approximate $\bA^{-1}$ by a sum (interpolated by the ${\bR\s}^\top$) of inverses of well-chosen problems $\tilde\bA\s$. Two-level domain decomposition methods have an additional space called the coarse space (generated by the columns of a matrix ${\bR\0}^\top$). A coarse problem $\tilde \bA \0$ is solved in the coarse space. As an example, an application of the two-level additive preconditioner to a vector $\bx \in \mathbb R^n$ takes the form:  $\sum_{s=1}^N \by\s$ where 
\[
  \by\s = {
    \tikzmarkin[opacity=0]{Rt}\bR\s\tikzmarkend{R}
  }^\top
  (\tikzmarkin[opacity=0.0]{A}\tilde\bA\s\tikzmarkend{A})\,^{-1}
  \tikzmarkin[opacity=0.0]{R}\bR\s\tikzmarkend{R} \bx \quad  \text{ and } \quad
  \by\0 =
  \tikzmarkin[opacity=0.0]{Rt0}{\bR\0}^\top\tikzmarkend{Rt0}
  (\tikzmarkin[opacity=0.0]{A0}\tilde\bA\0\tikzmarkend{A0})\,^{-1}
  \tikzmarkin[opacity=0.0]{R0}\bR\0\tikzmarkend{R0} \bx.
\]
\begin{tikzpicture}[remember picture, overlay]\footnotesize
  \coordinate (RtB) at ($(Rt)+(0.8em,-1.5em)$);
  \coordinate (RtA) at ($(RtB)+(0,-0.3cm)$);
  \node[left=0.2cm of RtA.north west] (textRt) at (RtA) {prolongation (by 0) to $\Omega$};
  \draw[->] (textRt)-|(RtB);
  \coordinate (AB) at ($(A)+(0.8em,-1.5em)$);
  \coordinate (AA) at ($(AB)+(0,-0.6cm)$);
  \node[left=0.2cm of AA.north west] (textA) at (AA) {local solve};
  \draw[->] (textA)-|(AB);
  \coordinate (RB) at ($(R)+(0.8em,-1.5em)$);
  \coordinate (RA) at ($(RB)+(0,-0.9cm)$);
  \node[left=0.2cm of RA.north west] (textR) at (RA) {restriction to $\Omega\s$};
  \draw[->] (textR)-|(RB);
  \coordinate (Rt0B) at ($(Rt0)+(0.8em,-1.5em)$);
  \coordinate (Rt0A) at ($(Rt0B)+(0,-0.9cm)$);
  \node[right=0.2cm of Rt0A.north west] (textRt0) at (Rt0A) {interpolation back into $\Omega$};
  \draw[->] (textRt0)-|(Rt0B);
  \coordinate (A0B) at ($(A0)+(0.8em,-1.5em)$);
  \coordinate (A0A) at ($(A0B)+(0,-0.6cm)$);
  \node[right=0.2cm of A0A.north west] (textA0) at (A0A) {coarse solve};
  \draw[->] (textA0)-|(A0B);
  \coordinate (R0B) at ($(R0)+(0.8em,-1.5em)$);
  \coordinate (R0A) at ($(R0B)+(0,-0.3cm)$);
  \node[right=0.2cm of R0A.north west] (textR0) at (R0A) {coarse interpolation};
  \draw[->] (textR0)-|(R0B);

\end{tikzpicture}
\vspace{0.5cm}

The notation in the previous line should seem completely natural to readers who have already studied domain decomposition. The coarse contribution $\by\0$ is written separately to insist on the fact that it plays a different role to the other $\by\s$. Usually, the coarse space is computed for a given choice or the $\bR\s$ and $\tilde \bA\s$. 

The choice of the coarse space is a very crucial topic in domain decomposition. 
Over the last decade, the range of symmetric positive definite problems for which two-level domain decomposition preconditioners can be made scalable and robust by a good choice of the coarse space has become very large by the development of so-called spectral coarse spaces. The following list gives an overview of some of these contributions: \cite{Nataf:TDD:2010,Nataf:CSC:2011,Dolean:ATL:2011,2010GalvisJ_EfendievY-a0,2010GalvisJ_EfendievY-ab,efendiev2012robust,2011SpillaneCR,spillane2013abstract,gander2017shem,agullo2019robust,heinlein2019adaptive} for Additive Schwarz, \cite{yu2020additive} for Additive Average Schwarz, \cite{SPILLANE:2013:FETI_GenEO_IJNME,agullo2019robust} for BDD and FETI, \cite{klawonn2016comparison,klawonn2016adaptive,calvo2016adaptive,kim2017bddc,da2017adaptive,pechstein2017unified,klawonn2015toward,zampini2016pcbddc} for BDDC and/or FETI-DP, \cite{haferssas2017additive} for Optimized Schwarz and \cite{marchand2020two} in the context of boundary element methods.

In this article it is particularly referred to the GenEO coarse spaces \cite{2011SpillaneCR,spillane2013abstract,SPILLANE:2013:FETI_GenEO_IJNME} to which one of the authors contributed. The abstract theory of coarse spaces of the GenEO family in~\cite{AbstractGenEO} is applied within the definition and analysis of the new Algebraic-Woodbury-GenEO (AWG) preconditioners. 

The spectral coarse spaces mentioned above, and in particular the GenEO coarse spaces, are computed by partially solving one or two generalized eigenvalue problems in each subdomain, then selecting either the lowest or highest-frequency eigenvectors and prolongating them to the global domain. To the best of the authors' knowledge, all of the spectral coarse spaces for which there are no assumptions on the shapes of subdomains and the distribution of material parameters require the knowledge of a set of symmetric positive semi-definite (spsd) matrices $\bN\s$ that satisfy 
\begin{equation}
\label{eq:spsd-splitting}
\exists \, C> 0 \text{ such that } \sum_{s=1}^N \langle \bx, {\bR\s}^\top \bN\s \bR\s \bx \rangle \leq C  \langle \bx, \bA \bx \rangle  \quad \forall \, \bx \in \mathbb R^n. 
\end{equation}
The matrices $\bN\s$ enter directly into the coarse space construction \textit{via} the choice of matrix pencil for the generalized eigenvalue problems. In other words they play an essential role.

For matrices arising from discretized PDEs the above assumption is far from unnatural and the matrices $\bN\s$ are not expensive to compute as long as it is known at assembly that they are required. Indeed, as an example, if $\bA$ arises from the finite element discretization (with basis functions  $\{\phi_k\}_{k=1,\dots,n}$) of the Laplace equation over some domain $\omega \subset \mathbb R^{2 \text{ or } 3}$, then the coefficients in $\bA$ are $\int_{\Omega} \nabla \phi_i \cdot \nabla \phi_j $. Assuming that the degrees of freedom selected by the restriction matrix $\bR\s$ are those in some $\omega\s \subset \omega$, $\bN\s$ can be taken to be the matrix whose coefficients are $ \int_{\omega\s} \nabla \phi_i \cdot \nabla \phi_j $ (for the basis functions $\phi_i$ and $\phi_j$ whose support intersects $\omega\s$). This is how condition \eqref{eq:spsd-splitting} is usually satisfied. These $\bN\s$ are sometimes called the local Neumann matrices. Partial assembly over subdomains is neither hard conceptually nor expensive computationally and the purpose of this article is not to rule it out when it is possible. There are however many cases where only $\bA$ is known or available without writing many more lines of code. Then, the unassembled information is simply lost and the GenEO coarse spaces can't be computed. This is quite a common scenario: the problem may have been assembled by another user or with another piece of software. In this case only black box algorithms can be used. 

Direct solvers, like MUMPS \cite{MUMPS:1,MUMPS:2}, belong to the category of black box solvers and they are the most efficient up to a certain problem size. In the field of domain decomposition, the authors of \cite{li2017low} propose an algebraic preconditioner under the name DD-LR (for Domain Decomposition based Low-Rank). The original matrix is rewritten in a particular form inspired by domain decomposition. The inverse of $\bA$ can then be expressed in terms of the components in that formulation thanks to the Woodbury matrix identity. Finally a low rank approximation of one of the terms is performed in order to get an approximation of $\bA^{-1}$ that can serve as a preconditioner. Our AWG preconditioners also exploit the Woodbury matrix identity but the modification of $\bA$ that it is applied to is entirely different.
 Also closely related to domain decomposition are the multigrid algorithms, a very well-established set of solvers that solves the problem by iterating over the original (fine) problem as well as coarser and coarser approximations of it. The original multigrid algorithm \cite{brandt1977multi,briggs2000multigrid} is often referred to as geometric multigrid as it requires more information about the problem than just the matrix $\bA$. Our objective with this algorithm is not unlike the objective of Algebraic Multigrid (AMG) which is to make multigrid applicable in cases where less information about the problem is accessible or some assumptions are not satisfied (see \cite{brandt1984algebraic,ruge1987algebraic} for the original contributions or \cite{xu2017algebraic} for a unified presentation and theory of many multigrid algorithms). Our ambition here is to propose an algorithm as easy to apply as are the Algebraic Multigrid algorithms.

In this article, a new preconditioner (with several variants) is proposed for the cases where $\bA$ is already assembled. It is a domain decomposition preconditioner with two coarse space. The methodology is the following:
\begin{itemize}
\item The problem matrix $\bA$ is split into symmetric, but possibly indefinite, matrices $\bB\s$ as $\bA = \sum_{s=1}^N {\bR\s}^\top \bB\s \bR\s$.
\item The positive parts $\bA_+^s$ of the matrices $\bB\s$ are computed and assembled to form a global matrix $\bA_+ = \sum_{s=1}^N {\bR\s}^\top \bA\s_+ \bR\s$. By construction, a splitting of $\bA_+$ into spsd matrices (\textit{i.e.}, a suitable partially unassembled form) of $\bA_+$ is known. In other words, with $\bN\s = \bA_+^s$ and $C=1$, \eqref{eq:spsd-splitting} is satisfied. Consequently, two-level preconditioners with GenEO coarse spaces can be computed for $\bA_+$ by applying the abstract theory in \cite{AbstractGenEO}. 
\item Finally, the Woodbury matrix identity relates the inverses of $\bA$ and of $\bA_+$ and makes apparent that a good preconditioner for $\bA$ can be obtained by adding a second coarse space to a GenEO preconditioner for $\bA_+$.
\end{itemize}
Full theory for the condition number of the new preconditioned operators is given as well as numerical illustrations. The outline of the remainder of this article is as follows. In Section~\ref{sec:AbstractSchwarz}, some elements of the Abstract Schwarz theory \cite{ToselliWidlund_book2005} are recalled in their fully algebraic form. For readers less familiar with domain decomposition, the general form of a one-level and a two-level domain decomposition preconditioner is given. In Section~\ref{sec:Apos}, the new operator $\bA_+$ is introduced and four preconditioners with their GenEO coarse spaces are considered for $\bA_+$. For each one, the spectral bounds are given by applying a result from \cite{AbstractGenEO}. Then, in Section~\ref{sec:newprecs}, $\bA$ is viewed as a modification of $\bA_+$ and the Woodbury matrix identity is applied. This makes apparent how to add a second coarse space to the preconditioners for $\bA_+$ in order to get a preconditioner for $\bA$ that satisfies nice convergence bounds. Each of these new preconditioners is indexed by one or two parameters (or thresholds) that can be adjusted to decrease the condition number of the preconditioned operator by enriching the GenEO coarse space with more spectral modes. Some comments are also made about the implementation of the new preconditioners. Finally, Section~\ref{sec:Numerical} presents some numerical results with the objective of confirming the theoretical results and illustrating the practical behaviour of the new AWG preconditioners. 

\section{Abstract Schwarz Framework in the Algebraic Setting}
\label{sec:AbstractSchwarz}

An algebraic version of the abstract Schwarz framework is introduced in this section. This means that all the domain decomposition-type operators are written only in terms of vectors in $\mathbb R^n$.

\subsection{Subdomains}

Let $\Omega = \llbracket 1, n \rrbracket$ be the set of all indices in $\mathbb R^n$. 

\begin{definition}[Partition of $\Omega$] 
\label{def:partition}
A set $\left( \Omega\s \right)_{s=1,\dots,N}$ of $N\in \mathbb N$ subsets of $\Omega = \llbracket 1, n \rrbracket$ is called a partition of $\Omega$ if
\[
\Omega = \bigcup_{s=1}^N \Omega\s. 
\]
The partition is said to have at least minimal overlap if Assumption~\ref{ass:minoverlap} is satisfied.
\end{definition}

\begin{assumption}[Minimal overlap]
\label{ass:minoverlap}
For any pair of indices $(i,j) \in \llbracket 1, n \rrbracket^2$, denoting by $A_{ij}$ the coefficient of $\bA$ at the $i$-th line and $j$-th column, 
\[
A_{ij} \neq 0 \Rightarrow \left( \exists \, s \in \llbracket 1, N \rrbracket \text{ such that } \{i,j\} \subset \Omega\s \right). 
\]
\end{assumption}

The usual global-to-local restriction matrices are defined next.

\begin{definition}
\label{def:Rs}
For each $s=1,\dots,N$, let $n \s$ be the cardinality of $\Omega\s$. Then, let $\bR\s \in \mathbb R^{n\s \times n}$ be the restriction matrix defined by: $\bR\s$ is zero everywhere except for the block formed by the columns in $\Omega\s$ which is the $n\s\times n\s$ identity matrix. 
\end{definition}

By simply performing the multiplications it can be proved that
\[
{\bR\s}^\top \bR\s \text{ is diagonal and that } {\bR\s} {\bR\s}^\top = \matid \text{ (the identity matrix in $ \mathbb R^{n\s} $ )}. 
\]

\subsection{Partition of unity, coloring constant}

In the construction and analysis of the preconditioners, two more elements from the abstract Schwarz theory are needed: the partition of unity matrices and the coloring constants.  

\begin{assumption}[Partition of unity matrices]
\label{ass:Ds}
Let $\{\bD \s \in \mathbb R^{n\s \times n\s}; \,s= 1, \dots, N\}$ be a family of matrices that satisfies 
\begin{equation}
\label{eq:POU}
\matid = \sum_{s=1}^N \bR \s\, ^\top \bD\s \bR \s, \text{ with $\matid$ the $n \times n$ identity matrix}.
\end{equation}
\end{assumption}

One way of fulfilling Assumption~\ref{ass:Ds} is to choose the following partition of unity matrices. 

\begin{definition}[Possible choice of partition of unity $\bD\s$]
\label{lem:defofDs}
First, let $\bD \in \mathbb R^{n\times n}$ be the non-singular diagonal matrix defined by
\[
\bD := \left(\sum_{t=1}^N {\bR \st}^\top  \bR \st\right)^{-1}. 
\]
Then, for each $\soneN$, let $\bD\s \in \mathbb R^{n\s \times n\s}$ be defined by  
\[
\bD \s :=  \bR \s \bD {\bR \s}^\top.
\]
\end{definition}

The matrices in the above definition satisfy Assumption~\ref{ass:Ds} (see \cite{AbstractGenEO}[Lemma 4]). Their coefficients are the inverses of the multiplicity of each degree of freedom. Next, the coloring constant is defined in agreement with \cite{ToselliWidlund_book2005}[Section 2.5.1]. The dependency of the coloring constant on the matrix with respect to which orthogonality is taken is written explicitly. 

\begin{definition}[Coloring constant]
\label{def:color}
Let $\bM \in \mathbb R^{n\times n}$ be a symmetric matrix. Let $\mathcal N(\bM) \in \mathbb N$ be such that there exists a set $\{ \mathcal C_j; \,  1 \leq j \leq \mathcal N(\bM)\}$ of pairwise disjoint subsets of $\llbracket 1, N \rrbracket$ satisfying
\[
\llbracket 1,N\rrbracket = \bigcup_{1\leq j \leq \mathcal N(\bM)} \mathcal C_j\quad \text{and}\quad \forall j \in \llbracket 1, \mathcal N(\bM) \rrbracket:\,  \{s,t\} \subset \mathcal C_j \Rightarrow (\bR\s \bM {\bR \st}^\top = \mathbf{0} \text{ or  } s=t).
\]
\end{definition}

\subsection{Abstract Schwarz preconditioners} 
\label{subs:abstract-prec}

One-level abstract Schwarz preconditioners are of the form:
\begin{equation}
\label{eq:bH}
\bH := \sum_{s=1}^N {\bR\s}^\top {\tilde \bA\s}\,^\dagger \bR\s.
\end{equation}
where for each $s=1,\dots,N$, it is assumed that 
\[
\tilde \bA\s \in  \mathbb R^{n\s \times n\s} \text{is an spsd matrix, and that } {\tilde \bA\s}\,^\dagger \text{ is the pseudo-inverse of $\tilde \bA \s$}. 
\]

Two-level domain decomposition preconditioners have two extra ingredients compared to the one-level method that they are based on: a coarse space and a coarse solver. Let's assume that the coarse space is denoted by $V\0$ and that the interpolation operator $\bR\0$ satisfies Assumption~\ref{ass:V0}.  

\begin{assumption}
\label{ass:V0}
A basis for the coarse space $V\0$ is stored in the lines of a matrix denoted ${\bR\0}$:
\[
V\0 = \range({\bR\0}^\top); \quad {\bR\0} \in \mathbb R^{n\0 \times n};\quad n^0 = \dim (V\0);\quad n\0 < n.
\]
\end{assumption}

The most common choice for the coarse solver, and the one that we wish to introduce, is the so called \textit{exact} solver, where the word \textit{exact} is with respect to the problem being solved. If the matrix in the linear system is an spd matrix $\tilde \bA$ then the matrix that is inverted during the coarse solve is $\bR\0 \tilde \bA {\bR\0}^\top$.

Even with the same $\tilde \bA\s$ and $\bR\0$, there are still at least two two-level preconditioners with exact coarse spaces: the two-level additive preconditioner (denoted by $\bH_{\mathrm{ad}}  $), and the hybrid preconditioner (denoted by $\bH_{\mathrm{hyb}}$ and also called the deflated preconditioner). They are defined as follows:  
\begin{equation}
\bH_{\mathrm{ad}} := \bH + {\bR\0}^\top (\bR\0 \tilde \bA {\bR\0}^\top)^{-1} \bR\0,
\label{eq:defH-ad}
\end{equation}
and 
\[
\bH_{\mathrm{hyb}} := \bPi \bH \bPi^\top + {\bR\0}^\top (\bR\0 \tilde\bA {\bR\0}^\top)^{-1} \bR\0;
\]
where
\begin{equation}
\label{eq:bPi}
\bPi := \matid - {\bR\0}^\top (\bR\0 \tilde\bA {\bR\0}^\top)^{-1} \bR\0 \tilde\bA; \quad \matid \text{ is the $n \times n$ identity matrix}.
\end{equation} 

The generic notation $\tilde \bA$ has very deliberately been used in the previous equations instead of $\bA$. Indeed, the two-level preconditioner in this article is a preconditioner for a new matrix that will be denoted $\bA_+$. The next section gives the definition of $\bA_+$ and the choices of $\tilde \bA\s$and $V\0$ that make the characterization of an abstract two-level preconditioner for $\bA_+$ complete.

\section{A new matrix $\bA_+$ and its two-level GenEO preconditioners} 
\label{sec:Apos}

This section introduces a lot of the new operators and notation. Since it is not known how to algebraically find local spsd matrices that satisfy \eqref{eq:spsd-splitting}, it is chosen to relax the assumption by allowing the matrices to be indefinite. Precisely, in subsection~\ref{subs:def-of-Bs} it is assumed that symmetric matrices $\bB\s$ are known such that $\bA = \sum_{s=1}^N {\bR\s}^\top \bB\s \bR\s$ and one possible (algebraic) choice of $\bB\s$ is given. In Section~\ref{subs:def-of-A+}, each $\bB\s$ is in turn split into an spsd part and a symmetric negative semi-definite part as $\bB\s = \bA_+\s - \bA_-\s$. Finally, the spsd parts are assembled to form $\bA_+ = \sum_{s=1}^N {\bR\s}^\top \bA_+\s \bR\s$. The resulting matrix $\bA_+$ is shown to be symmetric positive definite and, by construction, it satisfies \eqref{eq:spsd-splitting} with $\bN\s = \bA_+\s$ and $C = 1$. In other words, a very nice characteristic of $\bA_+$ is that the abstract GenEO theory applies to defining and analyzing two-level preconditioners for $\bA_+$. Four of these are considered in Section~\ref{subs:def-H2}.

\subsection{A splitting of $\bA$ into symmetric matrices}
\label{subs:def-of-Bs}

The matrices $\bB\s$ in the assumption below are the starting point for the new preconditioners. To make the construction complete, an example of such matrices, is given below. It is (of course) constructed algebraically and is the one used in our numerical computations. This choice is however far from unique. 

\begin{assumption}
\label{ass:symsplit}
Let's assume that there exists a family of symmetric matrices $\bB\s \in \mathbb R^{n\s \times n\s}$ for $s=1,\dots,N$ such that
\[
\bA = \sum_{s=1}^N {\bR\s}^\top \bB\s \bR\s.
\]
\end{assumption}

Such a family of matrices can always be chosen under Assumption~\ref{ass:minoverlap} (minimal overlap). Indeed, one possible choice is given in the next definition.

\begin{definition}[Possible choice of matrice $\bB\s$]
\label{def:Bs}
Let $\bS(\bA)$ be the $n \times n$ boolean matrix that shares the same sparsity pattern as $\bA$:
\[
(S(\bA))_{ij} := 
\left\{ \begin{array}{l}
1 \text{ if } A_{ij} \neq 0 \\
0 \text{ otherwise}
\end{array}\right. 
\text{ for any } i,\,j \in \llbracket 1, n \rrbracket.
\] 
Then let $\bM_\mu$ be the matrix that counts the number of subdomains that each pair of indices in $\{ \{i,j\};  \, A_{ij} \neq 0\} $ belongs to\footnote{This interpretation of $\bM_\mu$ is justified in the proof of Theorem~\ref{th:MmusplitA}}:
\[
\bM_\mu := \sum_{s=1}^N {\bR\s}^\top \bR\s \bS(\bA) {\bR\s}^\top \bR\s, 
\]
and let $\bB$ be the Hadamard division of $\bA$ by $\bM_\mu$  
\[
B_{ij} := 
\left\{ \begin{array}{l}
A_{ij}/{M_\mu}_{ij} \text{ if } A_{ij} \neq 0 \\
0 \text{ otherwise}
\end{array}\right. 
\text{ for any } i,\,j \in \llbracket 1, n \rrbracket.
\]
Finally, set $\bB\s$ to be the block of $\bB$ corresponding to degrees of freedom in $\Omega\s$:
\[
\bB\s := \bR\s \bB {\bR\s}^\top.
\]
\end{definition} 
Note that from the previous definition only the notation $\bB\s$ will be reused further on in the article. We next check that these matrices $\bB\s$ are indeed suitable.
\begin{theorem}
\label{th:MmusplitA}
Let $\bA$ be an order $n$ spd matrix, let $(\Omega\s)_{s=1,\dots,N}$ represent the partition into subdomains and let $(\bR\s)_{s=1,\dots,N}$ be the set of restriction matrices from Definition~\ref{def:Rs}. Under Assumption~\ref{ass:minoverlap}, the matrices $\bB\s$ from Definition~\ref{def:Bs} satisfy Assumption~\ref{ass:symsplit}.  
\end{theorem}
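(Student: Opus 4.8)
The plan is to show that $\sum_{s=1}^N {\bR\s}^\top \bB\s \bR\s = \bA$ by verifying the identity entry by entry, reducing everything to the definition of $\bB$ via the Hadamard division by $\bM_\mu$. First I would unfold the left-hand side: since $\bB\s = \bR\s \bB {\bR\s}^\top$, we have $\sum_s {\bR\s}^\top \bB\s \bR\s = \sum_s {\bR\s}^\top \bR\s \bB {\bR\s}^\top \bR\s$. The key observation is that each $\bP\s := {\bR\s}^\top \bR\s$ is the diagonal $0/1$ matrix that projects onto the coordinates in $\Omega\s$ (this was already noted after Definition~\ref{def:Rs}), so for any matrix $\bC$ the $(i,j)$ entry of $\bP\s \bC \bP\s$ equals $C_{ij}$ if $\{i,j\}\subset\Omega\s$ and $0$ otherwise. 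Applying this with $\bC = \bB$, the $(i,j)$ entry of the left-hand side is $B_{ij}$ times the number of subdomains containing both $i$ and $j$.

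Next I would identify that multiplier with ${M_\mu}_{ij}$. Applying the same projection observation with $\bC = \bS(\bA)$, the $(i,j)$ entry of $\bM_\mu = \sum_s \bP\s \bS(\bA) \bP\s$ is $\sum_s [\{i,j\}\subset\Omega\s]\,(S(\bA))_{ij}$, which is exactly $(S(\bA))_{ij}$ times $\#\{s : \{i,j\}\subset\Omega\s\}$. This justifies the footnote's interpretation of $\bM_\mu$: when $A_{ij}\neq 0$ we get ${M_\mu}_{ij} = \#\{s : \{i,j\}\subset\Omega\s\}$, and when $A_{ij}=0$ we get ${M_\mu}_{ij}=0$. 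Combining with the previous paragraph, the $(i,j)$ entry of $\sum_s {\bR\s}^\top \bB\s \bR\s$ is ${M_\mu}_{ij}\,B_{ij}$, which by the definition of $\bB$ as the Hadamard division equals $A_{ij}$ whenever $A_{ij}\neq 0$, and equals $0$ (hence $A_{ij}$) otherwise. This gives the claimed identity.

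The one place where care is genuinely needed — and where Assumption~\ref{ass:minoverlap} enters — is in the case $A_{ij}\neq 0$: we must know that $\#\{s : \{i,j\}\subset\Omega\s\}$ is nonzero, i.e. at least one subdomain contains both $i$ and $j$, so that dividing by ${M_\mu}_{ij}$ is legitimate and ${M_\mu}_{ij}\,B_{ij} = {M_\mu}_{ij}\cdot (A_{ij}/{M_\mu}_{ij}) = A_{ij}$ is valid. This is precisely the statement of Assumption~\ref{ass:minoverlap}. (If one also wanted $\bB$ to be well-defined as a symmetric matrix with the sparsity pattern of $\bA$, one notes that $\bM_\mu$ is symmetric and shares $\bA$'s zero pattern off the support of $\bS(\bA)$, so no spurious entries appear.) I expect this minimal-overlap bookkeeping to be the only subtle point; the rest is the routine projection-matrix computation sketched above.
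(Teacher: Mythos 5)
Your proof is correct and follows essentially the same route as the paper's: an entry-wise computation using that ${\bR\s}^\top \bR\s$ is the diagonal $0/1$ projection onto $\Omega\s$, applied first to identify $\bM_\mu$ with the pair-multiplicity count and then to reduce $\sum_s {\bR\s}^\top \bB\s \bR\s$ to ${M_\mu}_{ij} B_{ij} = A_{ij}$. If anything, you are slightly more explicit than the paper about precisely where Assumption~\ref{ass:minoverlap} is invoked (to guarantee ${M_\mu}_{ij}\neq 0$ when $A_{ij}\neq 0$), a point the paper makes only in the remark following the theorem.
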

\begin{proof}
First, we justify the fact that $\bM_\mu$ counts the multiplicity of the pairs of degrees of freedom $\{i,j\}$ for which $A_{ij} \neq 0$:
\[
\left(M_\mu \right)_{ij} = \left(\sum_{s=1}^N {\bR\s}^\top \bR\s \bS(\bA) {\bR\s}^\top \bR\s \right)_{ij} =  \sum\limits_{\{s; \{i,j\}\subset\Omega\s \}}\left( {\bR\s}^\top \bR\s \bS(\bA) {\bR\s}^\top \bR\s \right)_{ij} = \sum\limits_{\{s; \{i,j\}\subset\Omega\s \}} (S(\bA))_{ij} 
\]
so, for any $i,\,j \in \llbracket 1,n \rrbracket$,
\[
\left(M_\mu \right)_{ij} = \left\{ 
\begin{array}{l}
\# \{s; \{i,j\}\subset\Omega\s \} \text{ if } A_{ij}\neq 0, \\
0 \text{ otherwise.}
\end{array}
\right. 
\]
With a similar calculation, it can then be checked that the $\bB\s$ form a splitting of $\bA$:
\[
\left(\sum_{s=1}^N {\bR\s}^\top \bB\s  \bR\s \right)_{ij} =  \left(\sum_{s=1}^N {\bR\s}^\top \bR\s \bB {\bR\s}^\top \bR\s \right)_{ij} =\sum\limits_{\{s; \{i,j\}\subset\Omega\s \}} B_{ij}   
\]
so, for any $i,\,j \in \llbracket 1,n \rrbracket$,
\[
\left(\sum_{s=1}^N {\bR\s}^\top \bB\s  \bR\s \right)_{ij} = \left\{ 
\begin{array}{l}
\sum\limits_{\{s; \{i,j\}\subset\Omega\s \}}A_{ij}/ {M_\mu}_{ij} = A_{ij} \text{ if } A_{ij}\neq 0, \\
0 = A_{ij} \text{ otherwise.}
\end{array}
\right. 
\]
It can be concluded that $\bA = \sum_{s=1}^N {\bR\s}^\top \bB\s  \bR\s $. 
\end{proof}

\begin{remark}
Notice that, if the minimal overlap condition is not satisfied, then there exists a pair of indices $\{i, j\}$ for which $A_{ij} \neq 0$ but $\{s; \{i,j\}\subset\Omega\s \} = \emptyset$. This leads to $\left(\sum_{s=1}^N {\bR\s}^\top \bB\s  \bR\s \right)_{ij} = 0 $ and shows that it is impossible that Assumption~\ref{ass:symsplit} be satisfied without the minimal overlap condition (no matter how the matrices $\bB\s$ are chosen). 
\end{remark}

%
\subsection{Definition of $\bA_+$}
\label{subs:def-of-A+}

The first step in defining the very important matrix $\bA_+$ is to split $\bB\s$ into a positive part and a negative semi-definite part.

\begin{definition}[Splitting of $\bB\s$]
\label{def:As+As-}
Let $\bB\s$, for $\soneN$, be a family of matrices that satisfy Assumption~\ref{ass:symsplit}. For each $s$, let a diagonalization of $\bB\s$ be written as 
\[
\bB\s = \bV\s \bL\s {\bV\s}^\top; \text{ with } \bV\s \text{ orthogonal  and }  \bL\s \text{diagonal}.
\]
Assume, without loss of generality, that the diagonal values of $\bL\s$ (which are the eigenvalues of $\bB\s$) are sorted in non-decreasing order. Let $n\s_+$ be the number of positive eigenvalues and $n\s_- = n\s - n\s_+$ be the number of non-positive eigenvalues. Let $\bV\s_- \in \mathbb R^{n\s \times n\s_-}$, $\bV\s_+\in \mathbb R^{n\s \times n\s_+}$, $\bL\s_-\in \mathbb R^{n\s_- \times n\s_-}$, $\bL\s_+ \in \mathbb R^{n\s_+ \times n\s_+}$ be the blocks of $\bV\s$ and $\bL\s$ that satisfy
\[
\bL\s = \begin{pmatrix} \bL\s_- & \mathbf{0} \\ \mathbf{0} & \bL\s_+ \end{pmatrix}, \quad \bV\s = \left[\bV\s_- | \bV\s_+ \right], \quad \bL\s_+ \text { is spd}, \quad -\bL\s_- \text{ is spsd} . 
\] 
Finally, define the two following matrices in $\mathbb R^{n\s \times n\s}$:
\[
\bA\s_+ := \bV\s_+ \bL\s_+ {\bV\s_+}^\top  \text{ and } \bA\s_- := - \bV\s_- \bL\s_- {\bV\s_-}^\top . 
\]
\end{definition}

It is clear that, for each $\soneN$, both matrices $\bA\s_+$ and $\bA\s_-$ are spsd matrices and that $\bB\s = \bA\s_+ - \bA\s_-$. Next, global matrices are computed by assembling the local components with the usual restriction and prolongation operators $\bR\s$ and ${\bR\s}^\top$.

\begin{definition}[New matrices $\bA_+$ and $\bA_-$]
\label{def:A+A-}
Let the global matrices $\bA_+ \in \mathbb R^{n \times n}$ and $\bA_- \in \mathbb R^{n \times n}$ be defined by  
\[
\bA_+ := \sum_{s=1}^N \bRst \bA\s_+ \bR\s, \text{ and } \bA_- := \sum_{s=1}^N \bRst \bA\s_- \bR\s.  
\]
\end{definition}

\begin{theorem}
The matrices $\bA_+$ and $\bA_-$ from Definition \ref{def:A+A-} satisfy the following three properties
\begin{enumerate}[label=(\roman*)]
\item \label{it:Asplit} $\bA = \bA_+ - \bA_-$ ,
\item \label{it:A-spsd} 
the matrix $\bA_-$ is symmetric positive semi-definite,
\item \label{it:A+spd} 
the matrix $\bA_+$ is symmetric positive definite.
\end{enumerate}
\end{theorem}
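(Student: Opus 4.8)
The plan is to prove the three properties in the order \ref{it:Asplit}, \ref{it:A-spsd}, \ref{it:A+spd}, since the last one will rely on the first two.

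For \ref{it:Asplit}, I would simply substitute the decomposition $\bB\s = \bA\s_+ - \bA\s_-$ (which holds by Definition~\ref{def:As+As-}, as noted immediately after it) into the splitting $\bA = \sum_{s=1}^N \bRst \bB\s \bR\s$ provided by Theorem~\ref{th:MmusplitA} under Assumption~\ref{ass:minoverlap}. This gives $\bA = \sum_{s=1}^N \bRst (\bA\s_+ - \bA\s_-) \bR\s = \sum_{s=1}^N \bRst \bA\s_+ \bR\s - \sum_{s=1}^N \bRst \bA\s_- \bR\s = \bA_+ - \bA_-$ by linearity and Definition~\ref{def:A+A-}. No real work here.

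For \ref{it:A-spsd}, symmetry of $\bA_-$ is immediate because each $\bA\s_-$ is symmetric and conjugation by $\bR\s$ preserves symmetry. For semi-definiteness, I would take an arbitrary $\bx \in \mathbb R^n$ and write $\langle \bx, \bA_- \bx \rangle = \sum_{s=1}^N \langle \bx, \bRst \bA\s_- \bR\s \bx \rangle = \sum_{s=1}^N \langle \bR\s \bx, \bA\s_- \bR\s \bx \rangle \geq 0$, where each term is nonnegative because $\bA\s_-$ is spsd (Definition~\ref{def:As+As-}). The same computation verbatim shows $\bA_+$ is spsd as well, which I would record here since it is needed for the next step.

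For \ref{it:A+spd}, the key observation is that property \ref{it:Asplit} rearranges to $\bA_+ = \bA + \bA_-$. Then for any nonzero $\bx \in \mathbb R^n$, $\langle \bx, \bA_+ \bx \rangle = \langle \bx, \bA \bx \rangle + \langle \bx, \bA_- \bx \rangle \geq \langle \bx, \bA \bx \rangle > 0$, using \ref{it:A-spsd} for the first inequality and the hypothesis that $\bA$ is spd for the strict one. Combined with the symmetry of $\bA_+$ (again inherited from the $\bA\s_+$), this shows $\bA_+$ is spd. I do not anticipate any genuine obstacle; the only thing to be careful about is the logical ordering, namely that \ref{it:A+spd} should be deduced from \ref{it:Asplit} together with the semi-definiteness of $\bA_-$ rather than attempted directly from the definition of $\bA_+$.
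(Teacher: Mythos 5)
Your proposal is correct and follows essentially the same route as the paper: items \ref{it:Asplit} and \ref{it:A-spsd} by direct expansion and by the spsd-ness of each local $\bA\s_\pm$, and item \ref{it:A+spd} by rearranging to $\bA_+ = \bA + \bA_-$ and invoking the positive definiteness of $\bA$. The only cosmetic difference is that the paper phrases step \ref{it:A+spd} as showing $\operatorname{Ker}(\bA_+)=\{\mathbf 0\}$ after noting $\bA_+$ is already spsd, whereas you argue directly that $\langle\bx,\bA_+\bx\rangle>0$ for $\bx\neq\mathbf 0$; these are the same argument.
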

\begin{proof}
By definition, $\bA_+ = \sum_{s=1}^N \bRst \bV\s_+ \bL\s_+ {\bV\s_+}^\top   \bR\s$ and  $\bA_- =  - \sum_{s=1}^N \bRst \bV\s_- \bL\s_- {\bV\s_-}^\top   \bR\s$ where the matrices $ \bL\s_+$ ($\soneN$) are diagonal matrices with positive entries and the matrices $\bL\s_-$ ($\soneN$) are diagonal matrices with non-positive entries. Consequently, $\bA_+$ and $\bA_-$ are both spsd and item~\ref{it:A-spsd} is proved. Moreover, by Definition~\ref{def:As+As-} and Assumption \ref{ass:symsplit}, item~\ref{it:Asplit} holds:
\begin{align*}
\bA_+ - \bA_- &= \sum_{s=1}^N \bRst \bV\s_+ \bL\s_+ {\bV\s_+}^\top   \bR\s +   \sum_{s=1}^N \bRst \bV\s_- \bL\s_- {\bV\s_-}^\top   \bR\s \\
&=    \sum_{s=1}^N \bRst \bB\s \bR\s \\
&= \bA. 
\end{align*}

Finally, it has already been argued that $\bA_+$ is spsd so, to prove item~\ref{it:A+spd}, it remains only to confirm that the kernel of $\bA_+$ is restricted to the zero vector. Let $\bx \in \mathbb R^n$ such that $\bA_+ \bx = \mathbf{0}$, 
\[
0 = \langle \bx, \bA_+ \bx \rangle  = \langle \bx, \bA \bx \rangle + \langle \bx, \bA_- \bx \rangle \geq  \langle \bx, \bA\bx \rangle,  
\]
and this last term equals $0$ only if $\bx =\mathbf  0$ which ends the proof.
\end{proof} 

\begin{remark}
The zero eigenvalues of $\bB\s$ are in $\bL\s_-$. Another possibility would be to put them into $\bL\s_+$. 
\end{remark}

The previously defined matrix $\bA_+$ is an spd matrix for which we have knowledge of spsd local matrices $\bN\s = \bA_+\s$ that satisfy \eqref{eq:spsd-splitting} with $C = 1$. This means that it fits right into the abstract GenEO theory \cite{AbstractGenEO} and, hence, a variety of two-level preconditioners with guaranteed convergence rates can be defined. First, the one-level preconditioners to which to apply GenEO are chosen and then the GenEO coarse spaces are given. In terms of the two ingredients still missing in the abstract two-level preconditioners from Section~\ref{subs:abstract-prec}: the local solvers $\tilde \bA\s$ are defined in Section~\ref{subs:def-Honelevel} and the coarse interpolation operators $\bR\0$ are defined in Section~\ref{subs:def-H2}. 

\subsection{One-level preconditioners for $\bA_+$}
\label{subs:def-Honelevel}

In order to define a one-level preconditioner in our framework it remains only to choose the matrices $\tilde \bA\s$ (\textit{i.e.}, the local solvers) in the abstract form \eqref{eq:bH}. Three types of local solvers are introduced as all three are natural choices for $\bA_+$: the exact local solver ($\tilde \bA\s = \bR\s \bA_+ {\bR\s}^\top$), the matrix in the spsd splitting with adequate weights ($\tilde\bA\s = {\bD\s}^{-1} \bA\s_+ {\bD\s}^{-1}$), and what would be the exact solver if we were solving a problem with $\bA$ since this is, after all, our endgame ($\tilde\bA\s = \bR\s \bA {\bR\s}^\top$).

\begin{definition}
\label{def:HASH+ASHNN}
Let three one-level preconditioners be defined by:
\[
\bH_+\AS  := \sum_{s=1}^N {\bR\s}^\top (\bR\s \bA_+ {\bR\s}^\top)^{-1} \bR\s,
\]
\[
\bH\AS  := \sum_{s=1}^N {\bR\s}^\top (\bR\s \bA {\bR\s}^\top)^{-1} \bR\s,
\]
and
\[
\bH\NN  := \sum_{s=1}^N {\bR\s}^\top \bD\s (\bA\s_+)^\dagger \bD\s \bR\s = \sum_{s=1}^N {\bR\s}^\top \bD\s \bV\s_+ (\bL\s_+)\,^{-1} {\bV\s_+}^\top \bD\s \bR\s,
\]
where $\bD\s$ are the partition of unity matrices from Definition~\ref{lem:defofDs}.
\end{definition}

It is recalled that, for $\soneN$, $\bA_+\s$, $\bV\s_+$, and $\bL_+\s$ were introduced in Definition~\ref{def:As+As-}, $\bD\s$ in Definition~\ref{lem:defofDs}, and $\bA_+$ in Definition~\ref{def:A+A-}.

\begin{lemma}
The one-level preconditioners $\bH_+\AS$, $\bH\AS$ and $\bH\NN$ from Definition~\ref{def:HASH+ASHNN} are spd. 
\end{lemma}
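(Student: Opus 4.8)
The plan is to check symmetry once for all three operators and then prove positive definiteness case by case, grouping $\bH_+\AS$ and $\bH\AS$ together (they differ only in whether the spd matrix $\bA_+$ or the spd matrix $\bA$ appears) and handling $\bH\NN$ separately. Symmetry is immediate: each operator is a sum of terms of the form ${\bR\s}^\top\bW\s\bR\s$, where $\bW\s$ equals $(\bR\s\bA_+{\bR\s}^\top)^{-1}$, $(\bR\s\bA{\bR\s}^\top)^{-1}$, or $\bD\s(\bA\s_+)^\dagger\bD\s$; in each case $\bW\s$ is symmetric — the inverse, respectively the pseudo-inverse, of a symmetric matrix is symmetric, and $\bD\s$ is diagonal — so each term, hence the sum, is symmetric.

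For $\bH_+\AS$ and $\bH\AS$, write $\tilde\bA$ for $\bA_+$ or $\bA$. First I would observe that $\bR\s\tilde\bA{\bR\s}^\top$ is a principal submatrix of an spd matrix and is therefore spd: since $\bR\s{\bR\s}^\top=\matid$, the map ${\bR\s}^\top$ is injective, so $\langle\bw,\bR\s\tilde\bA{\bR\s}^\top\bw\rangle=\langle{\bR\s}^\top\bw,\tilde\bA{\bR\s}^\top\bw\rangle>0$ for $\bw\neq\mathbf 0$; its inverse is then spd as well. Consequently, for every $\bx\in\mathbb R^n$,
\[
\langle\bx,\bH\bx\rangle=\sum_{s=1}^N\big\langle\bR\s\bx,(\bR\s\tilde\bA{\bR\s}^\top)^{-1}\bR\s\bx\big\rangle\ge 0,
\]
with equality only if $\bR\s\bx=\mathbf 0$ for every $s$. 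Summing then gives $\left(\sum_{s=1}^N{\bR\s}^\top\bR\s\right)\bx=\mathbf 0$; since this matrix is $\bD^{-1}$ (Definition~\ref{lem:defofDs}), which is nonsingular, $\bx=\mathbf 0$. Hence $\bH_+\AS$ and $\bH\AS$ are spd.

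For $\bH\NN$ the plan rests on the commutation identity $\bD\s\bR\s=\bR\s\bD$, which I would derive from $\bD\s\bR\s=\bR\s\bD{\bR\s}^\top\bR\s$ by commuting the two diagonal matrices $\bD$ and ${\bR\s}^\top\bR\s$ and then using $\bR\s{\bR\s}^\top=\matid$. Transposing gives ${\bR\s}^\top\bD\s=\bD{\bR\s}^\top$, hence
\[
\bH\NN=\sum_{s=1}^N{\bR\s}^\top\bD\s(\bA\s_+)^\dagger\bD\s\bR\s=\bD\left(\sum_{s=1}^N{\bR\s}^\top(\bA\s_+)^\dagger\bR\s\right)\bD.
\]
Since $\bD$ is nonsingular, $\bH\NN$ is spd iff the inner sum is. That sum is spsd because each $(\bA\s_+)^\dagger=\bV\s_+(\bL\s_+)^{-1}{\bV\s_+}^\top$ is spsd; and if $\sum_{s=1}^N\langle\bR\s\bz,(\bA\s_+)^\dagger\bR\s\bz\rangle=0$ then $(\bA\s_+)^\dagger\bR\s\bz=\mathbf 0$ for every $s$, so multiplying by ${\bV\s_+}^\top$ and using ${\bV\s_+}^\top\bV\s_+=\matid$ gives ${\bV\s_+}^\top\bR\s\bz=\mathbf 0$, hence $\bA\s_+\bR\s\bz=\bV\s_+\bL\s_+{\bV\s_+}^\top\bR\s\bz=\mathbf 0$; summing, $\bA_+\bz=\sum_{s=1}^N{\bR\s}^\top\bA\s_+\bR\s\bz=\mathbf 0$, and positive definiteness of $\bA_+$ (item~\ref{it:A+spd} of the preceding theorem) forces $\bz=\mathbf 0$.

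I expect $\bH\NN$ to be the only genuine obstacle. Positive semidefiniteness of $\bH\NN$ comes for free from that of $(\bA\s_+)^\dagger$, but upgrading it to definiteness is not automatic: the partition-of-unity weights $\bD\s$ wedged between the restrictions and $(\bA\s_+)^\dagger$ block a direct comparison with the quadratic form of $\bA_+$. The identity $\bD\s\bR\s=\bR\s\bD$ is exactly what removes this difficulty, rewriting $\bH\NN$ as $\bD(\,\cdot\,)\bD$ and reducing the statement to the already-established positive definiteness of $\bA_+$.
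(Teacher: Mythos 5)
Your proof is correct and follows essentially the same route as the paper's: for $\bH_+\AS$ and $\bH\AS$ the paper simply cites that these are standard Additive Schwarz preconditioners for spd matrices (you reproduce the standard argument in full), and for $\bH\NN$ the paper also establishes positive semi-definiteness and then examines the kernel, reducing to the non-singularity of $\bA_+$ and of $\bD$. The only cosmetic difference is that you factor out $\bD$ up front via the commutation identity $\bD\s\bR\s=\bR\s\bD$ to write $\bH\NN=\bD\,\bigl(\sum_{s}{\bR\s}^\top(\bA\s_+)^\dagger\bR\s\bigr)\bD$, whereas the paper keeps the $\bD\s$ inside and performs the same simplification at the end of the kernel computation; the underlying identity and the final reduction to the positive definiteness of $\bA_+$ are identical.
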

\begin{proof}
The preconditioners $\bH_+\AS$ and $\bH\AS$ are usual Additive Schwarz preconditioners for spd matrices so they are spd. For the third preconditioner, it is obvious that $\bH\NN$ is spsd. Moreover, let $\bx \in \operatorname{Ker} (\bH\NN)$ then
\[
0 = \langle \bx , \bH\NN \bx \rangle = \sum_{s=1}^N \langle \bD\s \bR\s  \bx , {\bA\s_+}\,^{\dagger} \bD\s \bR\s  \bx \rangle.
\]
For this to hold, each term in the sum of non-negative terms must also be zero so:
\[
\text{for any } \soneN: \bD\s \bR\s  \bx \in \operatorname{Ker}(\bA\s_+\,^\dagger) =  \operatorname{Ker}(\bA\s_+) . 
\]
Let's prolongate $(\bA\s_+) \bD\s \bR\s \bx = \mathbf 0$ to the global domain with ${\bR\s}^\top$, sum over $s$ and inject the definition of $\bD\s$ (Definition~\ref{lem:defofDs} in which $\bD$ is diagonal) to obtain
\[
\mathbf 0 = \sum_{s=1}^N {\bR\s}^\top \bA\s_+ \bD\s \bR\s \bx =  \sum_{s=1}^N {\bR\s}^\top \bA\s_+  \bR \s \bD {\bR \s}^\top \bR\s \bx =  \sum_{s=1}^N {\bR\s}^\top \bA\s_+  \bR \s \bD \bx =  \bA_+ \bD \bx. 
\]
Finally, the non singularity of $\bA_+$ and of $\bD$ allow to conclude that $\bx = \mathbf 0$ which ends the proof.
\end{proof}

\begin{remark}
For the proof of the non-singularity of $\bH\NN$, the definition of the partition of unity matrices $\bD\s$ was used (Definition~\ref{lem:defofDs}). A general proof does not go through for all partitions of unity (\textit{i.e.,} if the $\bD\s$ in Definition~\ref{def:HASH+ASHNN} are replaced by another family of matrices that satisfy Assumption~\ref{ass:Ds}). However, it is not likely that the $\bD\s$ could and would be chosen in a way that makes $\bH\NN$ singular. In other words, this is a technical restriction and other choices of partition of unity matrices should definitely be explored. All parts of the article that are not related to $\bH\NN$ are not concerned by this technical restriction.  
\end{remark}

\subsection{Two-level preconditioners for $\bA_+$ with GenEO}
\label{subs:def-H2}
Next, the GenEO coarse spaces that correspond to solving a linear system for $\bA_+$ with each of the one-level preconditioners are introduced. The corresponding spectral bounds for the preconditioned operators are given. The proofs consist in giving the adequate references to \cite{AbstractGenEO}. The information is organized with one theorem per choice of one-level preconditioner. First, some very useful notation is chosen to designate a normalized basis of the high (or low) frequency eigenvectors with respect to a certain matrix pencil and a certain threshold. 

\begin{definition}
\label{def:YLYH}
Let $m \in \mathbb N^*$, let $\bM_\bA \in \mathbb R^{m\times m}$ be an spsd matrix, let $\bM_\bB \in \mathbb R^{m\times m}$ be an spd matrix. Let $(\lambda_k, \by_k)_{k=1,\dots,m}$ be the (ordered and $\bM_\bB$-normalized) eigenpairs of the generalized eigenproblem associated with matrix pencil $(\bM_\bA, \bM_\bB)$, \textit{i.e.,}   
\[
\lambda_k \in \mathbb R, \quad \by_k \in \mathbb R^m,\quad \langle \by_k, \bM_B \by_k \rangle = 1,\quad   \lambda_1 \leq \lambda_2 \leq \dots \leq \lambda_m, \quad\text{and} \quad \bM_\bA \by_k = \lambda \bM_\bB \by_k. 
\]

For any scalar $\tau >0$, set $m_L := \min \left\{k\in \llbracket 0, m-1 \rrbracket; \lambda_{k+1} \geq \tau \right\}$ if $\lambda_m \geq \tau$, and $m_L := m$ otherwise. Then define the two following matrices by concatenating eigenvectors 
\[
\bY_L(\tau, \bM_\bA, \bM_\bB) := [\by_1 | \dots |\by_{m_L}], \text{ and } \bY_H(\tau, \bM_\bA, \bM_\bB) := [\by_{m_L + 1} | \dots |\by_m ]. 
\]
\end{definition}
 
The subscripts $*_L$ and $*_H$ refer to the words \textit{low} and \textit{high} depending on which end of the spectrum is selected. The definition is in agreement with the definition in \cite{AbstractGenEO}. 

\begin{remark}
\label{rem:gevpinvMAMB}
It is never necessary to fully solve the generalized eigenvalue problem $\bM_\bA \by_k = \lambda \bM_\bB \by_k$. Instead, only the smallest, or the largest eigenvalues are required as well as the corresponding eigenvectors. This can be performed by an iterative method, many of which are implemented in SLEPc \cite{Hernandez:2005:SSF}. A spectral transformation is performed within these iterative eigensolvers to rewrite the generalized eigenvalue problem in a form that can be solved by a power iteration method \cite{golub13}[Section 7.3]. To this end, 
\begin{itemize}
\item the computation of $\bY_L(\tau, \bM_\bA, \bM_\bB)$ requires to be able to solve linear systems with $\bM_\bA$ and to multiply vectors by $\bM_\bB$,
\item the computation of  $\bY_H(\tau, \bM_\bA, \bM_\bB)$ requires to be able to solve linear systems with $\bM_\bB$ and to multiply vectors by $\bM_\bA$.
\end{itemize}
\end{remark}

\begin{theorem}[Two-level preconditioners with GenEO for $\bH\AS_+$]
For any $\tau_\flat > 1$, let $V\0\uASp(\tau_\flat)$ be defined by
\[
V\0_{\text{AS},+}(\tau_\flat) := \sum_{s=1}^N \range( {\bR\s}^\top \bY_L(\tau_\flat^{-1}, {\bD\s}^{-1} \bA\s_+ {\bD\s}^{-1},    \bR\s \bA_+ {\bR\s}^\top )),
\]
and assume that a corresponding interpolation matrix ${\bR\uASp\0}(\tau_\flat)$ is defined to satisfy Assumption~\ref{ass:V0}. Then, the coarse projector as well as the hybrid and additive preconditioners are defined naturally as 
\begin{eqnarray*}
\bPi\AS_+(\tau_\flat) := &\matid - {\bR\uASp\0}(\tau_\flat)^\top (\bR\uASp\0(\tau_\flat) \bA_+ {\bR\uASp\0}(\tau_\flat)^\top)^{-1} \bR\uASp\0(\tau_\flat) \bA_+, \\
\bH_{+,\mathrm{hyb}}\AS(\tau_\flat)  := &\bPi\AS_+(\tau_\flat)  \bH_+\AS {\bPi_+\AS(\tau_\flat)} ^\top + {\bR\uASp\0(\tau_\flat)}^\top (\bR\uASp\0(\tau_\flat) \bA_+ {\bR\uASp\0(\tau_\flat)}^\top)^{-1} \bR\uASp\0(\tau_\flat), \\
\bH_{+,\mathrm{ad}}\AS(\tau_\flat) := &\bH_+\AS + {\bR\uASp\0(\tau_\flat)}^\top (\bR\uASp\0(\tau_\flat) \bA_+ {\bR\uASp\0(\tau_\flat)}^\top)^{-1} \bR\uASp\0(\tau_\flat).
\end{eqnarray*}

The eigenvalues of the preconditioned operators are bounded as follows
\begin{eqnarray*}
 1/\tau_{\flat} \leq & \lambda(\bH_{+,\mathrm{hyb}}\AS(\tau_\flat) \bA_+ ) &\leq \mathcal N(\bA_+) \\ 
1/((1 + 2 \mathcal N(\bA_+)) \tau_\flat) \leq & \lambda(\bH_{+,\mathrm{ad}}\AS(\tau_\flat) \bA_+ )  &\leq \mathcal N(\bA_+) + 1 ,
\end{eqnarray*}
where $\mathcal N(\bA_+)$ is the coloring constant with respect to matrix $\bA_+$ (see Definition~\ref{def:color}). 
\end{theorem}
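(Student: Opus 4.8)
The plan is to recognize this as a direct application of the abstract GenEO theory from \cite{AbstractGenEO} to the matrix $\bA_+$, which by construction comes equipped with an spsd splitting satisfying \eqref{eq:spsd-splitting} with $C=1$ (namely $\bN\s = \bA\s_+$). The one-level preconditioner in question is $\bH\AS_+ = \sum_s {\bR\s}^\top (\bR\s \bA_+ {\bR\s}^\top)^{-1} \bR\s$, i.e. the exact additive Schwarz preconditioner with local solvers $\tilde\bA\s = \bR\s \bA_+ {\bR\s}^\top$. So first I would check that all the hypotheses of the relevant abstract theorem in \cite{AbstractGenEO} are in place: $\bA_+$ is spd (proved in the previous theorem), the $\bR\s$ and $\bD\s$ satisfy Assumptions~\ref{ass:Ds} and come from Definitions~\ref{def:Rs}--\ref{lem:defofDs}, and the local solvers are exact. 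Then the GenEO coarse space for the exact-local-solver case is exactly the one built from the generalized eigenproblem with pencil $({\bD\s}^{-1}\bA\s_+{\bD\s}^{-1}, \bR\s\bA_+{\bR\s}^\top)$ and low-frequency threshold, which is what $V\0\uASp(\tau_\flat)$ is.

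Second, I would invoke the two abstract bounds. The abstract GenEO theory provides, for the hybrid (deflated) two-level preconditioner built on an exact additive Schwarz method, the bound $1/\tau \le \lambda(\bH_{\mathrm{hyb}}\bA_+) \le \mathcal N$, where $\tau$ is the eigenvalue threshold controlling the coarse space and $\mathcal N$ is the coloring constant with respect to the matrix being solved (here $\bA_+$); note the threshold in the theorem statement is $\tau_\flat^{-1}$ passed to $\bY_L$, matching the reciprocal appearing in the lower bound $1/\tau_\flat$. For the additive version one uses the standard relation between the hybrid and additive two-level preconditioners: adding the coarse correction additively rather than by deflation changes the spectral interval in a controlled way. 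Concretely, the additive two-level operator satisfies $\bH_{\mathrm{ad}}\bA_+ \preceq (\mathcal N + 1)\,\mathrm{Id}$ from a standard coloring/stable-splitting argument (the extra $+1$ being the coarse color), and from below one uses the fusion lemma (e.g. \cite{ToselliWidlund_book2005}) that relates the smallest eigenvalue of the additive operator to that of the hybrid one with a loss factor of $(1 + 2\mathcal N)$, giving $1/((1 + 2\mathcal N)\tau_\flat)$. All of these are quoted from \cite{AbstractGenEO} (and the classical references therein); the proof is essentially a matter of matching notation.

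Third, I would verify the notational correspondence carefully: the abstract theory uses a generic spsd matrix $\bA$ with a splitting $\sum {\bR\s}^\top \bN\s \bR\s$; here that role is played by $\bA_+$ with $\bN\s = \bA\s_+$ and $C=1$; the partition of unity is $\bD\s$; the eigenproblem pencil in the abstract exact-local-solver case is $({\bD\s}^{-1}\bN\s{\bD\s}^{-1}, \tilde\bA\s)$ with $\tilde\bA\s = \bR\s\bA_+{\bR\s}^\top$, which is exactly $({\bD\s}^{-1}\bA\s_+{\bD\s}^{-1}, \bR\s\bA_+{\bR\s}^\top)$ as in the definition of $V\0\uASp(\tau_\flat)$. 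One must also check that $C=1$ does not introduce an extra constant into the bounds (it does not: the upper bounds only involve $\mathcal N$, and the constant $C$ would only enter if $C \ne 1$).

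The main obstacle, such as it is, is purely bookkeeping: making sure the exact theorem in \cite{AbstractGenEO} being cited is the one for \emph{exact local solvers} (as opposed to the $\bD\s$-weighted or Neumann-Neumann variants, which are handled in the other theorems of this subsection with preconditioners $\bH\AS$ and $\bH\NN$), and confirming that the reciprocal convention on the threshold ($\tau_\flat$ versus $\tau_\flat^{-1} = \tau$) is consistent so that the stated bound $1/\tau_\flat \le \lambda$ is literally what the abstract result gives. There is no genuinely new mathematical content to produce here; the work is in the precise citation. Hence the proof will read: \emph{this is \cite{AbstractGenEO}[relevant Theorem] applied to $\bA_+$ with $\bN\s = \bA\s_+$, $C=1$, exact local solvers $\tilde\bA\s = \bR\s\bA_+{\bR\s}^\top$, and partition of unity $\bD\s$; the additive bounds follow from the hybrid ones by the standard comparison \cite{ToselliWidlund_book2005}.}
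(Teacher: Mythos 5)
Your proposal is correct and is essentially the paper's own argument: identify $\bA_+$ as an spd matrix with a known spsd splitting ($\bN\s=\bA\s_+$, $C=1$), take exact local solvers $\tilde\bA\s=\bR\s\bA_+{\bR\s}^\top$, and cite the abstract GenEO bounds from \cite{AbstractGenEO} with the matching eigenproblem pencil. The paper's proof is a bare citation to \cite{AbstractGenEO}[Corollary 3] (hybrid) and \cite{AbstractGenEO}[Corollary 4] (additive), with parameters $\mathcal N'=1$, $\bM\s={\bD\s}^{-1}\bA\s_+{\bD\s}^{-1}$, $\tilde\bA\s=\bR\s\bA_+{\bR\s}^\top$ and the coarse-space formulation of \cite{AbstractGenEO}[Definition 5]; you instead propose to recover the additive bounds from the hybrid ones by a coloring argument for the upper bound and a fusion/comparison argument with loss factor $(1+2\mathcal N)$ for the lower bound. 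That derivation is consistent with the stated constants and is plausibly how \cite{AbstractGenEO}[Corollary 4] is itself proved, so the distinction is only one of citation granularity, not of mathematical substance. One small notational wobble in your write-up: you write the hybrid lower bound as $1/\tau$ where $\tau$ is ``the eigenvalue threshold,'' then identify the threshold passed to $\bY_L$ as $\tau_\flat^{-1}$; to be unambiguous you should state that the abstract result bounds the surviving eigenvalues below by the cut-off value itself, which here is $\tau_\flat^{-1}$, giving $1/\tau_\flat$.
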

\begin{proof}
This results from an application of \cite{AbstractGenEO}[Corollary 3] (for the hybrid preconditioner) and \cite{AbstractGenEO}[Corollary 4] (for the additive preconditioner) under \cite{AbstractGenEO}[Assumption 7]. The parameters are $\mathcal N' = 1$, $\bM\s = {\bD\s}^{-1} \bA\s_+ {\bD\s}^{-1}$, $\tilde \bA \s =  \bR\s \bA_+ {\bR\s}^\top$ and the alternate formulation for the coarse space given in \cite{AbstractGenEO}[Definition 5].
\end{proof}

Note that, if $0 <  \tau_\flat \leq 1$, there is also a spectral result which is slightly longer to state (it involves $\min$ and $\max$).

\begin{theorem}[Two-level preconditioner with GenEO for $\bH\NN$]
\label{th:H2NN}
For any $0 < \tau_\sharp < 1$, let $V\0\uNN(\tau_\sharp)$ be defined by
\[
V\0\uNN(\tau_\sharp) := \sum_{s=1}^N \range( {\bR\s}^\top \bY_L(\tau_\sharp, {\bD\s}^{-1} \bA\s_+ {\bD\s}^{-1},   \bR\s \bA_+ {\bR\s}^\top  )),
\]
and assume that a corresponding interpolation matrix ${\bR\uNN\0}(\tau_\sharp)$ is defined to satisfy Assumption~\ref{ass:V0}. Then, the coarse projector as well as the hybrid preconditioner are defined naturally as 
\begin{eqnarray*}
\bPi\NN(\tau_\sharp) := &\matid - {\bR\uNN\0}(\tau_\sharp)^\top (\bR\uNN\0(\tau_\sharp) \bA_+ {\bR\uNN\0}(\tau_\sharp)^\top)^{-1} \bR\uNN\0(\tau_\sharp) \bA_+, \\
\bH_{\mathrm{hyb}}\NN(\tau_\sharp)  := &\bPi\NN(\tau_\sharp)  \bH\NN {\bPi\NN(\tau_\sharp)} ^\top + {\bR\uNN\0(\tau_\sharp)}^\top (\bR\uNN\0(\tau_\sharp) \bA_+ {\bR\uNN\0(\tau_\sharp)}^\top)^{-1} \bR\uNN\0(\tau_\sharp).
\end{eqnarray*}
The eigenvalues of the preconditioned operator are bounded as follows
\begin{eqnarray*}
 1 \leq & \lambda(\bH_{\mathrm{hyb}}\NN(\tau_\sharp) \bA_+ ) &\leq \mathcal N(\bA_+)/ \tau_\sharp, 
\end{eqnarray*}
where $\mathcal N(\bA_+)$ is the coloring constant with respect to matrix $\bA_+$ (see Definition~\ref{def:color}). 
\end{theorem}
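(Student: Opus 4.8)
The plan is to deduce the statement from the abstract GenEO theory of \cite{AbstractGenEO}, exactly as was done for $\bH_+\AS$, but with the Neumann--Neumann one-level preconditioner $\bH\NN$ in the role played there by the exact-local-solver preconditioner. What makes this work is the structure already established for $\bA_+$: it is spd and comes, by construction, with the spsd splitting $\bA_+ = \sum_{s=1}^N {\bR\s}^\top \bA\s_+ \bR\s$, for which \eqref{eq:spsd-splitting} holds with $\bN\s = \bA\s_+$ and $C = 1$. Combined with the facts that $\{\bD\s\}$ is a partition of unity (Assumption~\ref{ass:Ds}, guaranteed by Definition~\ref{lem:defofDs}) and that $\bH\NN$ is spd, all the hypotheses of the abstract framework are in place, so the condition-number bounds it provides carry over to the present setting.

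Concretely I would proceed in four steps. First, recognize $\bH\NN = \sum_{s=1}^N {\bR\s}^\top \bD\s (\bA\s_+)^\dagger \bD\s \bR\s$ as the partition-of-unity-weighted Neumann--Neumann one-level preconditioner built from the local Neumann matrices $\bA\s_+$, one of the admissible one-level methods covered by \cite{AbstractGenEO}. Second, check that $V\0\uNN(\tau_\sharp)$ is exactly the GenEO coarse space of \cite{AbstractGenEO} (in the alternate formulation of \cite{AbstractGenEO}[Definition 5]) associated with the local matrix pencils $({\bD\s}^{-1}\bA\s_+{\bD\s}^{-1},\, \bR\s\bA_+{\bR\s}^\top)$ at threshold $\tau_\sharp$, keeping the part of the spectrum below $\tau_\sharp$; note in particular that $\kernel(\bA\s_+)$ sits inside this selected subspace, since it is the eigenspace for the eigenvalue $0 < \tau_\sharp$, which is what makes the weighted pseudo-inverse local solves consistent with the coarse correction. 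Third, identify $\bPi\NN(\tau_\sharp)$ and $\bH_{\mathrm{hyb}}\NN(\tau_\sharp)$ as the coarse projector and the hybrid two-level preconditioner of the abstract setting. Fourth, apply \cite{AbstractGenEO}[Corollary 3] for the hybrid preconditioner, under \cite{AbstractGenEO}[Assumption 7], now with the Neumann--Neumann local solvers $\bD\s (\bA\s_+)^\dagger \bD\s$, the parameters $\mathcal N' = 1$, $\bM\s = {\bD\s}^{-1}\bA\s_+{\bD\s}^{-1}$ and the second pencil matrix $\bR\s\bA_+{\bR\s}^\top$, and $C = 1$ for the splitting: the stable-decomposition constant controlling the smallest eigenvalue then equals $1$, while the largest eigenvalue is bounded by $\mathcal N(\bA_+)/\tau_\sharp$, with $\mathcal N(\bA_+)$ the coloring constant of Definition~\ref{def:color}. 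This yields $1 \le \lambda(\bH_{\mathrm{hyb}}\NN(\tau_\sharp)\bA_+) \le \mathcal N(\bA_+)/\tau_\sharp$.

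The main obstacle I anticipate is verifying that $\bH\NN$ genuinely falls under the abstract framework: unlike $\bH_+\AS$, it is built from the pseudo-inverse $(\bA\s_+)^\dagger$ of a merely spsd matrix and from the weights $\bD\s$, so one has to confirm both that this is an admissible local solver in \cite{AbstractGenEO} and that the kernels of the $\bA\s_+$ are absorbed into the coarse space, so that $\bH\NN$, the projector $\bPi\NN(\tau_\sharp)$ and the exact coarse solve are mutually consistent. A related subtlety is seeing why the lower bound is exactly $1$ rather than merely some constant $< 1$: for the Neumann--Neumann method the relevant $\bA_+$-stable decomposition of a vector $\bx$ is built from the weighted local pieces $\bD\s\bR\s\bPi\NN(\tau_\sharp)\bx$, and \eqref{eq:POU} together with the exactness ($C = 1$) of $\bA_+ = \sum_{s=1}^N {\bR\s}^\top \bA\s_+ \bR\s$ bounds the total cost of this decomposition by $\langle \bA_+ \bPi\NN(\tau_\sharp)\bx,\, \bPi\NN(\tau_\sharp)\bx\rangle \le \langle \bA_+ \bx, \bx\rangle$, the last inequality holding because $\bPi\NN(\tau_\sharp)$ is an $\bA_+$-orthogonal projection; hence no $1/\tau_\sharp$ factor appears on the lower end, and $\tau_\sharp$ enters only the upper bound, through the local spectral estimate on the complement of the coarse space. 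Finally, a purely expository remark: only the hybrid variant is stated, because the additive combination of $\bH\NN$ with the exact coarse solve does not enjoy as clean an upper bound.
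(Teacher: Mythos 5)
Your overall strategy matches the paper's: deduce the bounds from the abstract GenEO framework of \cite{AbstractGenEO} after observing that $\bA_+$ comes with the spsd splitting $\bA_+ = \sum_{s=1}^N {\bR\s}^\top \bA\s_+ \bR\s$ with $C=1$, and that $\bH\NN$ is spd. Your discussion of the roles of the eigenvalue pencil $({\bD\s}^{-1}\bA\s_+{\bD\s}^{-1}, \bR\s\bA_+{\bR\s}^\top)$, the inclusion of $\kernel(\bA\s_+)$ in the coarse space, and the reason the lower spectral bound equals $1$ (the $\bA_+$-stable decomposition built from $\bD\s\bR\s\bPi\NN(\tau_\sharp)\bx$ costing no more than $\langle \bA_+\bx,\bx\rangle$ because $C=1$ and $\bPi\NN(\tau_\sharp)$ is an $\bA_+$-orthogonal projection) are exactly the right mechanisms.

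There is, however, one concrete misstep: you invoke \cite{AbstractGenEO}[Corollary 3], whereas the paper invokes \cite{AbstractGenEO}[Corollary 2] in the limit $\tau_\flat\to 1$ (referring to \cite{AbstractGenEO}[Section 5.2.3]). In this article, Corollary 3 is what was used for the $\bH_+\AS$ preconditioners, where the one-level local solver coincides with the exact local solver $\bR\s\bA_+{\bR\s}^\top$ and only a single threshold $\tau_\flat$ appears in the bound ($1/\tau_\flat \le \lambda \le \mathcal N(\bA_+)$). The Neumann--Neumann one-level preconditioner uses a local solver $\tilde\bA\s = {\bD\s}^{-1}\bA\s_+{\bD\s}^{-1}$ that is \emph{not} the exact solver, so the two-threshold Corollary 2 is needed, and the lower bound of $1$ arises precisely as the $\tau_\flat\to 1$ degeneration (made possible because $C=1$ in the splitting). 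Your stable-decomposition sketch in fact explains why $\tau_\flat$ can be driven to $1$, but the corollary you name does not accommodate the mismatch between the local solver and the exact solver. Correcting the citation to Corollary 2 with $\tau_\flat\to 1$ makes the argument line up with the paper's.
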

\begin{proof}
This results from an application of \cite{AbstractGenEO}[Corollary 2] with $\mathcal N' = 1$, $\tau_\flat \rightarrow 1$ (as in \cite{AbstractGenEO}[Section 5.2.3]), $\tilde \bA \s = {\bD\s}^{-1}\bA\s_+ {\bD\s}^{-1} $ and $\bR\s \bA_+ {\bR\s}^\top $ as the exact local solver (because we are solving for $\bA_+$). 
\end{proof}

Note that, if $\tau_\sharp \geq 1$, there is also a spectral result which is slightly longer to state (it involves $\min$ and $\max$). There is no spectral result for the two-level additive preconditioner so it is not considered in the theorem.

\begin{theorem}[Two-level preconditioner with GenEO for $\bH\AS$]
For any $\tau_\flat > 1$ and $0 < \tau_\sharp < 1$, let $V\0\uAS(\tau_\sharp,\tau_\flat)$ be defined by
\begin{align*}
V\0\uAS(\tau_\sharp, \tau_\flat) := \sum_{s=1}^N \range&( {\bR\s}^\top \bY_L(\tau_\flat^{-1}, {\bD\s}^{-1} \bA\s_+ {\bD\s}^{-1},  \bR\s \bA {\bR\s}^\top ))\\
&+ \sum_{s=1}^N \range( {\bR\s}^\top \bY_L(\tau_\sharp, \bR\s \bA {\bR\s}^\top,  \bR\s \bA_+ {\bR\s}^\top  )),
\end{align*}
and assume that a corresponding interpolation matrix ${\bR\uAS\0}(\tau_\sharp, \tau_\flat)$ is defined to satisfy Assumption~\ref{ass:V0}. Then, the coarse projector as well as the hybrid preconditioner are defined naturally as 
\begin{eqnarray*}
\bPi\AS(\tau_\sharp,\tau_\flat) := &\matid - {\bR\uAS\0}(\tau_\sharp,\tau_\flat)^\top (\bR\uAS\0(\tau_\sharp,\tau_\flat) \bA_+ {\bR\uAS\0}(\tau_\sharp,\tau_\flat)^\top)^{-1} \bR\uAS\0(\tau_\sharp,\tau_\flat) \bA_+, \\
\bH_{\mathrm{hyb}}\AS(\tau_\sharp,\tau_\flat)  := &\bPi\AS(\tau_\sharp,\tau_\flat)  \bH\AS {\bPi\AS(\tau_\sharp,\tau_\flat)} ^\top + {\bR\uAS\0(\tau_\sharp,\tau_\flat)}^\top (\bR\uAS\0(\tau_\sharp,\tau_\flat) \bA_+ {\bR\uAS\0(\tau_\sharp,\tau_\flat)}^\top)^{-1} \bR\uAS\0(\tau_\sharp,\tau_\flat).
\end{eqnarray*}
The eigenvalues of the preconditioned operator are bounded as follows
\begin{eqnarray*}
1/\tau_\flat  \leq & \lambda(\bH_{\mathrm{hyb}}\AS(\tau_\sharp,\tau_\flat) \bA_+ ) &\leq \mathcal N(\bA_+)/ \tau_\sharp, 
\end{eqnarray*}
where $\mathcal N(\bA_+)$ is the coloring constant with respect to matrix $\bA_+$ (see Definition~\ref{def:color}). 
\end{theorem}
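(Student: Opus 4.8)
The plan is to mirror the proofs of the two preceding theorems and reduce everything to a single invocation of the abstract GenEO theory of \cite{AbstractGenEO}, applied to the spd matrix $\bA_+$. Recall that $\bA_+ = \sum_{s=1}^N \bRst \bA\s_+ \bR\s$ with each $\bA\s_+$ spsd, so \eqref{eq:spsd-splitting} holds with $\bN\s = \bA\s_+$ and $C=1$; this is exactly the data the abstract theory requires. The one-level preconditioner under consideration is $\bH\AS$, whose local solvers are $\tilde\bA\s = \bR\s \bA {\bR\s}^\top$. The essential difference with the $\bH\AS_+$ case is that here $\tilde\bA\s$ is \emph{not} the exact local solver for $\bA_+$ (that one being $\bR\s \bA_+ {\bR\s}^\top$), which is precisely why the coarse space splits into two pieces coming from two generalized eigenproblems and why both thresholds $\tau_\flat$ and $\tau_\sharp$ appear in the final bound (whereas in Theorem~\ref{th:H2NN} one of them is sent to $1$).

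First I would record the well-posedness of the ingredients. Since $\bA$ and $\bA_+$ are spd and each $\bR\s$ satisfies $\bR\s{\bR\s}^\top = \matid$ (full row rank), the matrices $\bR\s \bA {\bR\s}^\top$ and $\bR\s \bA_+ {\bR\s}^\top$ are principal submatrices of spd matrices, hence spd; and $\bA\s_+$, hence $\bM\s := {\bD\s}^{-1}\bA\s_+{\bD\s}^{-1}$, is spsd. Therefore both pencils
\[
\left({\bD\s}^{-1}\bA\s_+{\bD\s}^{-1},\ \bR\s \bA {\bR\s}^\top\right) \quad\text{and}\quad \left(\bR\s \bA {\bR\s}^\top,\ \bR\s \bA_+ {\bR\s}^\top\right)
\]
fall under Definition~\ref{def:YLYH}, and $\tilde\bA\s = \bR\s \bA {\bR\s}^\top$ is an admissible (spsd, in fact spd) local solver in the sense of Section~\ref{subs:abstract-prec}. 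The first pencil, with threshold $\tau_\flat$, controls the stable-splitting constant of the one-level method and yields the lower eigenvalue bound $1/\tau_\flat$; the second pencil, with threshold $\tau_\sharp$, controls the discrepancy between the chosen local solver $\bR\s \bA {\bR\s}^\top$ and the exact local solver $\bR\s \bA_+ {\bR\s}^\top$ for the problem actually being solved, and yields the upper eigenvalue bound $\mathcal N(\bA_+)/\tau_\sharp$. The coloring constant that enters is $\mathcal N(\bA_+)$ because, by Definition~\ref{def:color}, orthogonality must be taken with respect to the matrix being solved, namely $\bA_+$.

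I would then apply the hybrid-preconditioner result of \cite{AbstractGenEO} (Corollary~2, with both $\tau_\flat$ and $\tau_\sharp$ kept finite, rather than the specialization used for Theorem~\ref{th:H2NN}) under \cite{AbstractGenEO}[Assumption~7], with parameters $\mathcal N' = 1$ (reflecting $C=1$), $\bM\s = {\bD\s}^{-1}\bA\s_+{\bD\s}^{-1}$, $\tilde\bA\s = \bR\s \bA {\bR\s}^\top$, and $\bR\s \bA_+ {\bR\s}^\top$ as the exact local solver; the coarse space is taken in the ``alternate'' sum-of-ranges form of \cite{AbstractGenEO}[Definition~5], which is exactly the $V\0\uAS(\tau_\sharp,\tau_\flat)$ written above. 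Since $\bH\AS$ and $\bPi\AS(\tau_\sharp,\tau_\flat)$ live inside the same algebraic Abstract Schwarz framework already set up in Section~\ref{sec:AbstractSchwarz}, the stated bounds are then read off directly from the cited corollary with no further estimate to establish.

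The one place where care is genuinely needed — and the only place a slip is easy — is the bookkeeping of conventions between the abstract statement and the notation used here: the direction of the inequality defining $\bY_L$, the inversion $\tau_\flat \mapsto \tau_\flat^{-1}$ in the first eigenproblem (so that the lower bound comes out as $1/\tau_\flat$ and not $\tau_\flat$), and the fact that the abstract theorem only requires the coarse space to \emph{contain} the span produced by the two eigenproblems, so that any further enrichment preserves the bounds. Once these identifications are checked against \cite{AbstractGenEO}, together with the verification that $\tilde\bA\s = \bR\s \bA {\bR\s}^\top$ meets the admissibility hypotheses of \cite{AbstractGenEO}[Assumption~7] (which it does, being spd), the proof is complete.
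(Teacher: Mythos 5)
Your proof takes exactly the same route as the paper: a direct appeal to the hybrid result, \cite{AbstractGenEO}[Corollary 2], under \cite{AbstractGenEO}[Assumption 7], with $\mathcal N'=1$, $\bM\s = {\bD\s}^{-1}\bA\s_+{\bD\s}^{-1}$, $\tilde\bA\s = \bR\s\bA{\bR\s}^\top$, exact local solver $\bR\s\bA_+{\bR\s}^\top$, and the coarse space in the sum-of-ranges form of \cite{AbstractGenEO}[Definition 5]. Your extra commentary on admissibility, the role of each pencil, and the $\tau_\flat\mapsto\tau_\flat^{-1}$ bookkeeping is accurate but not substantively different from the paper's one-line citation.
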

\begin{proof}
This results from \cite{AbstractGenEO}[Corollary 2] with $\mathcal N' = 1$, $\bM\s = {\bD\s}^{-1} \bA\s_+ {\bD\s}^{-1}$, $\tilde \bA \s =  \bR\s \bA {\bR\s}^\top$, $\bR\s \bA_+ {\bR\s}^\top $ as the exact local solver (because we are solving for $\bA_+$), and the alternate formulation for the coarse space given in \cite{AbstractGenEO}[Definition 5].
\end{proof}

Note that, if $0 <  \tau_\flat \leq 1$, there is also a spectral result which is slightly longer to state (it involves $\min$ and $\max$). The choice $\tau_\sharp \geq 1$ would lead to having all the vectors in the coarse space so this is excluded. There is no spectral result for the two-level additive preconditioner so it is not considered in the theorem.

\begin{remark}
For all four considered choices of two-level preconditioner, it is easy to see that $\sum_{s=1}^N {\bR\s}^\top \bD\s \bV\s_-$ is included in the coarse space because $\operatorname{Ker}({\bD\s}^{-1} \bA\s_+ {\bD\s}^{-1}) = \bD\s \bV\s_-$.
\end{remark}
 
\section{Algebraic Woodbury-GenEO (AWG) preconditioners for $\bA$}
\label{sec:newprecs}

In the previous section, four two-level preconditioners for $\bA_+$ (indexed by the choice of one or two thresholds) have been introduced with their spectral bounds: $\bH_{+,\mathrm{hyb}}\AS(\tau_\flat)$, $\bH_{+,\mathrm{ad}}\AS(\tau_\flat)$, $\bH_{\mathrm{hyb}}\NN(\tau_\sharp)$, and $\bH_{\mathrm{hyb}}\AS(\tau_\sharp, \tau_\flat)$. In this section, let $\bHtwo$ denote any one of them and let $[\lambda_{\min}(\bHtwo \bA_+), \lambda_{\max}(\bHtwo \bA_+)]$ be an interval that contains all eigenvalues of the preconditioned operator $\bHtwo \bA_+$. The subscript $*_2$ was chosen to refer to two-level preconditioners. We may now set aside the choice of a preconditioner for $\bA_+$ and come back to our original problem of finding a preconditioner for $\bA$.

\subsection{Woodbury matrix identity for $\bA = \bA_+ - \bA_-$} 

The new preconditioner for $\bA$ arises from the realization that $\bA$ can be viewed as a low rank modification of $\bA_+$ and adding a term to $\bHtwo$ accordingly. 

\begin{theorem}
The rank of $\bA_-$, which we denote by $n_-$, satisfies $n_- \leq \sum_{s=1}^N n\s - n$. 
\end{theorem}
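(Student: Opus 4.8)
The plan is to bound $\operatorname{rank}(\bA_-)$ by a sum of local ranks, then to use that $\bA_+$ is full rank to convert that sum into the stated expression. First I would recall two standard facts about ranks: rank is subadditive ($\operatorname{rank}(\bX + \bY) \leq \operatorname{rank}(\bX) + \operatorname{rank}(\bY)$), and the rank of a product is bounded by the ranks of the factors. Applying these to the definition $\bA_- = \sum_{s=1}^N {\bR\s}^\top \bA\s_- \bR\s$ gives
\[
n_- = \operatorname{rank}(\bA_-) \leq \sum_{s=1}^N \operatorname{rank}({\bR\s}^\top \bA\s_- \bR\s) \leq \sum_{s=1}^N \operatorname{rank}(\bA\s_-) \leq \sum_{s=1}^N n\s_- ,
\]
where the final inequality holds because $\bA\s_- = -\bV\s_- \bL\s_- {\bV\s_-}^\top$ with $\bV\s_- \in \mathbb R^{n\s \times n\s_-}$, so its rank cannot exceed $n\s_-$ (in fact it equals the number of strictly negative eigenvalues of $\bB\s$).

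Next I would run the very same computation for the positive part. From $\bA_+ = \sum_{s=1}^N {\bR\s}^\top \bA\s_+ \bR\s$ and the same rank inequalities, $\operatorname{rank}(\bA_+) \leq \sum_{s=1}^N \operatorname{rank}(\bA\s_+) \leq \sum_{s=1}^N n\s_+$. The key point is that $\bA_+$ has already been shown to be spd, hence nonsingular, so $\operatorname{rank}(\bA_+) = n$ and therefore $n \leq \sum_{s=1}^N n\s_+$.

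Finally I would combine the two bounds using $n\s = n\s_+ + n\s_-$ for each $s$:
\[
n_- \leq \sum_{s=1}^N n\s_- = \sum_{s=1}^N n\s - \sum_{s=1}^N n\s_+ \leq \sum_{s=1}^N n\s - n ,
\]
which is exactly the claimed inequality. There is no real obstacle here: the whole argument is subadditivity of rank plus the previously established nonsingularity of $\bA_+$. The only minor point worth noting is that since $\bR\s$ has full row rank ($\bR\s {\bR\s}^\top = \matid$), the middle inequalities $\operatorname{rank}({\bR\s}^\top \bA\s_- \bR\s) \leq \operatorname{rank}(\bA\s_-)$ are in fact equalities, but only the inequality direction is needed.
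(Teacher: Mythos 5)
Your proof is correct and follows essentially the same route as the paper: bound $\operatorname{rank}(\bA_-) \leq \sum_s n\s_-$ and $\operatorname{rank}(\bA_+) \leq \sum_s n\s_+$ by subadditivity, then use $\operatorname{rank}(\bA_+) = n$ and $n\s_+ + n\s_- = n\s$. The paper states the combined inequality $\operatorname{rank}(\bA_+) + \operatorname{rank}(\bA_-) \leq \sum_s n\s$ directly without spelling out the intermediate steps you give, but the underlying argument is identical.
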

\begin{proof}
By definition of $\bA_+$ and $\bA_-$, it holds that
\[
\operatorname{rank}(\bA_+) + \operatorname{rank}(\bA_-) \leq \sum_{s=1}^N n\s, 
\] 
and $\operatorname{rank}(\bA_+) =n$ as $\bA_+$ is non-singular.
\end{proof}

With words, the rank of $\bA_-$ is at most the difference between the number of degrees of freedom and the number of degrees of freedom multiplied by their multiplicity. If there is little overlap (while still satisfying the minimal overlap condition) then the rank of $\bA_-$ is small compared to the rank $n$ of $\bA$ ($n_- \ll n$). In practice it is possible, and desirable that $n_-$ be much smaller even than $\sum_{s=1}^N n\s - n $. Following this observation it is natural to write $\bA$ as a modification of $\bA_+$. 

To this end, let's introduce the factors in the diagonalization of $\bA_-$.

\begin{definition}
\label{def:V-L-}
Let $\bL_- \in \mathbb R^{n_- \times n_-}$ and $\bV_- \in \mathbb R^{n \times n_-}$ be the diagonal matrix and the orthogonal matrix that are obtained by removing the null part of $\bA_-$ from its diagonalization in such a way that 
\[
\bA_- = \bV_- \bL_- \bV_-^\top. 
\]
\end{definition}

\begin{remark}
The diagonalization of $\bA_-$ is not actually required in the numerical implementation (see Section~\ref{subs:tricks}). 
\end{remark}

It now holds that $\bA = \bA_+ -  \bV_- \bL_- \bV_-^\top$ with $\bA$, $\bA_+$, $\bL_-$ spd matrices and $\bV_-$ a full rank matrix. The Woodbury matrix identity \cite{woodbury1950inverting} applied to computing the inverse of $\bA$ gives
\begin{equation}
\label{eq:Woodbury}
\bA^{-1} = \bA_+^{-1} +  \bA_+^{-1} \bV_- \left(\bL_-^{-1} - \bV_-^\top \bA_+^{-1} \bV_-  \right)^{-1} \bV_-^\top \bA_+^{-1}. 
\end{equation}

\begin{remark}
The Woodbury matrix identity is also called the Sherman-Morrison-Woodbury formula (\textit{e.g.}, in \cite{golub13}[Section 2.1.4]). The formula is correct since $\bA_+$ is non-singular and $\bL_-^{-1} - \bV_-^\top \bA_+^{-1} \bV_- $ is also non-singular. Indeed, let $\by \in \mathbb R^{n_-}$, and assume that $(\bL_-^{-1} - \bV_-^\top \bA_+^{-1} \bV_-)\by = \mathbf 0$,  then
\[
 \bV_- \by = \bV_- \bL_- \bV_-^\top \bA_+^{-1} \bV_- \by \, \Leftrightarrow \,  \bV_- \by = \bA_- \bA_+^{-1} \bV_- \by. 
\]  
With $\bz =  \bA_+^{-1} \bV_- \by$, it then holds that $\bA_+ \bz = \bA_- \bz$ which is equivalent to $\bA \bz = \mathbf 0$ and in turn to $\bz = \mathbf  0$ and $\by = \mathbf  0$.  
\end{remark}

\subsection{AWG preconditioner for $\bA$ with inexact coarse space}

The Woodbury matrix identity leads, rather straightforwardly, to a new preconditioner for the original matrix $\bA$ that is defined in the following theorem.

\begin{theorem}[AWG preconditioner for $\bA$ with inexact coarse space]
Given a preconditioner $\bHtwo$ for $\bA_+$ such that the eigenvalues of $\bHtwo \bA_+$ are in the interval $[\lambda_{\min}(\bHtwo \bA_+), \lambda_{\max}(\bHtwo \bA_+)]$. Let the inexact AWG preconditioner for $\bA$ be defined as 
\[
\bH_{3,\mathrm{inex}} := \bHtwo + \bA_+^{-1} \bV_- \left(\bL_-^{-1} - \bV_-^\top \bA_+^{-1} \bV_-  \right)^{-1} \bV_-^\top \bA_+^{-1}. 
\]
The eigenvalues of the new preconditioned operator satisfy 
\begin{equation}
\label{eq:lambdaWood}
\min (1,\lambda_{\min}(\bHtwo \bA_+)) \leq \lambda(\bH_{3,\mathrm{inex}} \bA ) \leq \max(1, \lambda_{\max}(\bHtwo \bA_+)).
\end{equation}
\end{theorem}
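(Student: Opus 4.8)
The plan is to first recognize that $\bH_{3,\mathrm{inex}}$ is nothing but the exact inverse $\bA^{-1}$ corrupted by the error of the inexact coarse solve. Indeed, comparing the definition of $\bH_{3,\mathrm{inex}}$ with the Woodbury identity \eqref{eq:Woodbury}, the rank-$n_-$ correction term appearing in both is exactly $\bA^{-1} - \bA_+^{-1}$, so that
\[
\bH_{3,\mathrm{inex}} = \bA^{-1} + \big(\bHtwo - \bA_+^{-1}\big);
\]
in particular $\bH_{3,\mathrm{inex}}$ is symmetric, because $\bHtwo$ and $\bA_+^{-1}$ are.

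Next I would reduce the spectral claim to an estimate on a Rayleigh quotient. Since $\bA$ is spd and $\bH_{3,\mathrm{inex}}$ is symmetric, $\bH_{3,\mathrm{inex}}\bA$ is similar to the symmetric matrix $\bA^{1/2}\bH_{3,\mathrm{inex}}\bA^{1/2}$; hence its eigenvalues are real, and for an eigenpair $(\mu,\bx)$ the substitution $\bw = \bA\bx \neq \mathbf 0$ gives $\bH_{3,\mathrm{inex}}\bw = \mu\,\bA^{-1}\bw$, so that $\mu = \langle\bw,\bH_{3,\mathrm{inex}}\bw\rangle / \langle\bw,\bA^{-1}\bw\rangle$. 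It therefore suffices to bound this quotient over all $\bw \neq \mathbf 0$. The same remark applied to $\bHtwo$ and $\bA_+$ recasts the hypothesis on $\bHtwo\bA_+$ as
\[
\lambda_{\min}(\bHtwo\bA_+)\,\langle\bw,\bA_+^{-1}\bw\rangle \ \le\ \langle\bw,\bHtwo\bw\rangle \ \le\ \lambda_{\max}(\bHtwo\bA_+)\,\langle\bw,\bA_+^{-1}\bw\rangle \qquad \forall\,\bw.
\]

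Then, inserting the identity for $\bH_{3,\mathrm{inex}}$ and abbreviating $a := \langle\bw,\bA^{-1}\bw\rangle$ and $a_+ := \langle\bw,\bA_+^{-1}\bw\rangle$ (both positive when $\bw \neq \mathbf 0$), the quotient becomes
\[
\frac{\langle\bw,\bH_{3,\mathrm{inex}}\bw\rangle}{\langle\bw,\bA^{-1}\bw\rangle} \;=\; 1 + \frac{\langle\bw,\bHtwo\bw\rangle - a_+}{a}\ \in\ \Big[\,1 + \big(\lambda_{\min}(\bHtwo\bA_+)-1\big)\tfrac{a_+}{a}\,,\ \ 1 + \big(\lambda_{\max}(\bHtwo\bA_+)-1\big)\tfrac{a_+}{a}\,\Big].
\]
The crucial structural input is that $\bA_-$ is spsd, so $\bA \preceq \bA_+$ and hence $\bA^{-1} \succeq \bA_+^{-1}$, which forces $0 < a_+/a \le 1$. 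A short case distinction on the sign of $\lambda_{\max}(\bHtwo\bA_+)-1$ (using $a_+/a \le 1$ when it is non-negative and $a_+/a > 0$ when it is negative) bounds the quotient above by $\max(1,\lambda_{\max}(\bHtwo\bA_+))$; the symmetric argument with $\lambda_{\min}(\bHtwo\bA_+)-1$ bounds it below by $\min(1,\lambda_{\min}(\bHtwo\bA_+))$. Together these are exactly \eqref{eq:lambdaWood}.

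I do not expect a genuine obstacle. The two steps that must be written with care are the passage to the symmetric Rayleigh quotient together with the change of variables $\bw = \bA\bx$, and the operator monotonicity $\bA \preceq \bA_+ \Rightarrow \bA^{-1} \succeq \bA_+^{-1}$ — this is the single place where the semidefiniteness of $\bA_-$ enters, and it is precisely what confines $a_+/a$ to $(0,1]$ so that the case analysis closes. Everything else is elementary arithmetic.
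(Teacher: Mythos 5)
Your proof is correct and follows essentially the same route as the paper: transfer the spectral bound on $\bHtwo\bA_+$ to $\bH_{3,\mathrm{inex}}\bA$ via the Woodbury identity, using that the Woodbury correction $\bA^{-1}-\bA_+^{-1}$ is spsd so that $0<\langle\bw,\bA_+^{-1}\bw\rangle/\langle\bw,\bA^{-1}\bw\rangle\le 1$. In fact your writeup is more careful than the paper's, which simply states that the bound follows by ``adding the correction term to each side''; the case analysis on the sign of $\lambda_{\max}(\bHtwo\bA_+)-1$ and $\lambda_{\min}(\bHtwo\bA_+)-1$ together with $a_+/a\in(0,1]$ is exactly the step the paper leaves implicit.
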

\begin{proof}
The estimate for the eigenvalues of $\bHtwo \bA_+$ is equivalent to 
\[
\lambda_{\min}(\bHtwo \bA_+)   \langle \bx, \bA_+^{-1} \bx \rangle \leq \langle \bx, \bHtwo \bx \rangle \leq \lambda_{\max}(\bHtwo \bA_+) \langle \bx, \bA_+^{-1} \bx \rangle, \,\forall \bx \in \mathbb R^n .
\]
Adding, $ \langle \bx, \bA_+^{-1} \bV_- \left(\bD_-^{-1} - \bV_-^\top \bA_+^{-1} \bV_-  \right)^{-1} \bV_-^\top \bA_+^{-1} \bx \rangle$ to each term, it holds that
\[
\min (1, \lambda_{\min}(\bHtwo \bA_+) )  \langle \bx, \bA^{-1} \bx \rangle \leq \langle \bx, \bH_{3,\mathrm{inex}} \bx \rangle \leq \max (1, \lambda_{\max}(\bHtwo \bA_+)) \langle \bx, \bA^{-1} \bx \rangle, \,\forall \bx \in \mathbb R^n ,
\]
where the Woodbury matrix identity \eqref{eq:Woodbury} was applied. This is equivalent to \eqref{eq:lambdaWood}.
\end{proof}

We have just introduced new preconditioners called the AWG preconditioners with an inexact coarse space. The plural in the previous sentence comes from the fact that there are many possible choices for $\bH_2$ (including the four from Section~\ref{sec:Apos}) and that for each one there are parameters that can be adjusted. These new preconditioners are purely algebraic and they have guaranteed spectral bounds when applied to solving linear system $\bA \bx = \mathbf b$. The condition number of $\bH_{3,\mathrm{inex}} \bA$ can be made smaller by enriching the coarse space in $\bH_2$. The name \textit{inexact} comes from the fact that $\bH_{3,\mathrm{inex}} \bA$ has the form of a domain decomposition preconditioner with two coarse spaces, one is in $\bH_2$ and the other is in the term $\bA_+^{-1} \bV_- \left(\bL_-^{-1} - \bV_-^\top \bA_+^{-1} \bV_-  \right)^{-1} \bV_-^\top \bA_+^{-1}$ where the coarse solve $\left(\bL_-^{-1} - \bV_-^\top \bA_+^{-1} \bV_-  \right)^{-1} $ is inexact. Next, two other AWG preconditioners are defined which have an exact coarse spaces in the sense of the Abstract Schwarz theory: their coarse operator is of the form $\bR\0 \bA {\bR\0}^\top$. 

\subsection{Additive and hybrid AWG preconditioners for $\bA$} 

Solving a problem with the preconditioner introduced in the previous section requires it to be computationally feasible to multiply by $(\bA_+ ^{-1} \bV_-)$ and its transpose. Looking at the Woodbury identity, we realize (and prove it in the lemma below) that $\text{range}\left(\bA_+ ^{-1} \bV_- \right) = \text{range}\left(\bA^{-1} \bV_- \right)$. This opens up new possibilities: if it is possible to compute $\text{range}\left(\bA^{-1} \bV_- \right)$, it is also possible to project $\bA$-orthogonally onto the space that is $\ell_2$-orthogonal to $\range\left(\bV_- \right)$ which is exactly $\text{Ker}(\bA_-)$ (see $\bPi_3$ in Definition~\ref{def:Pi3H3} below). On that space, $\bA_{|\text{Ker}(\bA_-)} = \left(\bA_+\right)_{|\text{Ker}(\bA_-)}$ and we can fall back onto a known and efficient preconditioner $\bH_2$ for $\bA_+$.

\begin{lemma}
\label{lem:rangeinvAV-}
With $\bA_+$ from Definition~\ref{def:A+A-} and $\bV_-$ from Definition~\ref{def:V-L-}, the following property holds
\[
\text{range}\left(\bA_+ ^{-1} \bV_- \right) = \text{range}\left(\bA^{-1} \bV_- \right).
\]
\end{lemma}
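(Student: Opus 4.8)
The plan is to establish the two inclusions $\range(\bA^{-1}\bV_-) \subseteq \range(\bA_+^{-1}\bV_-)$ and $\range(\bA_+^{-1}\bV_-) \subseteq \range(\bA^{-1}\bV_-)$ by a direct algebraic manipulation, exploiting the decomposition $\bA_+ = \bA + \bA_-$ together with the factorisation $\bA_- = \bV_-\bL_-\bV_-^\top$ from Definition~\ref{def:V-L-}.

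For the first inclusion, I would take an arbitrary element of $\range(\bA^{-1}\bV_-)$, namely $\bA^{-1}\bV_-\bz$ for some $\bz \in \mathbb R^{n_-}$, and compute
\[
\bA_+ \bA^{-1}\bV_-\bz = (\bA + \bV_-\bL_-\bV_-^\top)\,\bA^{-1}\bV_-\bz = \bV_-\bigl(\bz + \bL_-\bV_-^\top\bA^{-1}\bV_-\bz\bigr).
\]
Multiplying on the left by $\bA_+^{-1}$ then exhibits $\bA^{-1}\bV_-\bz = \bA_+^{-1}\bV_-\bw$ with $\bw := \bz + \bL_-\bV_-^\top\bA^{-1}\bV_-\bz$, so that $\bA^{-1}\bV_-\bz \in \range(\bA_+^{-1}\bV_-)$. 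The reverse inclusion is entirely symmetric: starting from $\bA_+^{-1}\bV_-\bw$ and using $\bA = \bA_+ - \bV_-\bL_-\bV_-^\top$ one gets $\bA\,\bA_+^{-1}\bV_-\bw = \bV_-\bigl(\bw - \bL_-\bV_-^\top\bA_+^{-1}\bV_-\bw\bigr)$, hence $\bA_+^{-1}\bV_-\bw = \bA^{-1}\bV_-\bz$ with $\bz := \bw - \bL_-\bV_-^\top\bA_+^{-1}\bV_-\bw$.

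A slightly shorter alternative would be to prove only one of the inclusions and then close the argument by a dimension count: since $\bV_-$ has full column rank $n_-$ (Definition~\ref{def:V-L-}) and both $\bA$ and $\bA_+$ are non-singular, the matrices $\bA^{-1}\bV_-$ and $\bA_+^{-1}\bV_-$ each have rank $n_-$, so their ranges are $n_-$-dimensional subspaces of $\mathbb R^n$ and a single inclusion already forces equality.

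I do not expect any genuine obstacle here: the statement reduces to the one-line identity $\bA_+ = \bA + \bV_-\bL_-\bV_-^\top$. The only point meriting a moment of care is to record explicitly why the two ranges share the same dimension (if the dimension-count route is taken), or, equivalently, to observe that the manipulation above is reversible (if both inclusions are written out by hand).
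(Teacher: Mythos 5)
Your proof is correct, but it takes a genuinely different route from the paper's. The paper invokes the Woodbury identity~\eqref{eq:Woodbury} to write $\bA^{-1}\bV_- = \bA_+^{-1}\bV_-\bigl(\matid + (\bL_-^{-1} - \bV_-^\top\bA_+^{-1}\bV_-)^{-1}\bV_-^\top\bA_+^{-1}\bV_-\bigr)$ and then shows the $n_-\times n_-$ factor on the right is invertible by chasing through its kernel, which immediately gives equality of ranges. You bypass Woodbury entirely and work directly from the rank-$n_-$ relation $\bA_+ = \bA + \bV_-\bL_-\bV_-^\top$, establishing each inclusion by an explicit change of the $\mathbb R^{n_-}$-parameter, and you also point out the cleaner closing argument that both ranges have dimension exactly $n_-$ (since $\bV_-$ has full column rank and $\bA$, $\bA_+$ are nonsingular), so a single inclusion suffices. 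Your approach is more elementary and self-contained — it never touches the $\bL_-^{-1}$ that appears in Woodbury, hence does not implicitly rely on $\bL_-$ being nonsingular beyond what Definition~\ref{def:V-L-} already provides — whereas the paper's route reuses machinery it has already set up, so neither version is a clear improvement in the surrounding context, but yours is the shorter standalone argument.
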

\begin{proof}
It follows from the Woodbury identity that  
\[
\text{range}(\bA^{-1}\bV_-) = \text{range}\left(\bA_+^{-1}\bV_-(\matid + \left(\bL_-^{-1} - \bV_-^\top \bA_+^{-1} \bV_-  \right)^{-1} \bV_-^\top \bA_+^{-1}\bV_-) \right), 
\]
where $\matid$ is the $n_- \times n_-$ identity matrix. Proving the result in the lemma comes down to proving that the range of $(\matid + \left(\bL_-^{-1} - \bV_-^\top \bA_+^{-1} \bV_-  \right)^{-1} \bV_-^\top \bA_+^{-1}\bV_-)$ is the whole of $\mathbb R^{n_-}$ or, equivalently, that it's kernel is restricted to $\mathbf 0$ (the zero vector in $\mathbb R^{n_-}$). This last step is achieved as follows. Let $\by \in \mathbb R^{n_-}$,
\begin{align*}
& \left(\matid + \left(\bL_-^{-1} - \bV_-^\top \bA_+^{-1} \bV_-  \right)^{-1} \bV_-^\top \bA_+^{-1}\bV_-\right)\by = \mathbf 0 \\ 
\Leftrightarrow & \left(\bL_-^{-1} - \bV_-^\top \bA_+^{-1} \bV_-  \right)\by =- \bV_-^\top \bA_+^{-1}\bV_- \by \\
 \Leftrightarrow & \bL_-^{-1} \by =  \mathbf 0\\ 
\Leftrightarrow & \by = \mathbf 0.
\end{align*} 
\end{proof}

\begin{definition}
\label{def:Pi3H3}
Let $\bW \in \mathbb R^{n\times n_-}$ be such that $\range (\bW) = \range (\bA_+^{-1} \bV_-)$ and let 
\[
\bPi_3 := \matid - \bW (\bW^\top \bA \bW)^{-1} \bW^\top \bA. 
\]
Assume that $\bHtwo$ is a given preconditioner for $\bA_+$ such that the eigenvalues of $\bHtwo \bA_+$ are in the interval $[\lambda_{\min}(\bHtwo \bA_+), \lambda_{\max}(\bHtwo \bA_+)]$. Let two new preconditioner for $\bA$ be defined as 
\[
\bH_{3,\mathrm{ad}} :=  \bHtwo + \bW (\bW^\top \bA \bW)^{-1} \bW^\top \text{ (Additive AWG preconditioner)}, 
\]
and
\[
\bH_{3,\mathrm{hyb}} := \bPi_3 \bHtwo \bPi_3^\top + \bW (\bW^\top \bA \bW)^{-1} \bW^\top \text{ (Hybrid AWG preconditioner)}. 
\]
\end{definition}

\begin{theorem}
Let $\bPi_3$, $\bH_{3,\mathrm{ad}}$ and $\bH_{3,\mathrm{hyb}}$ be as in Definition~\ref{def:Pi3H3}. The operator $\bPi_3$ is an $\bA$-orthogonal projection operator that satisfies
\begin{equation}
\text{Ker}(\bPi_3) = \text{range}\left(\bA^{-1}\bV_-\right) \text{ and } \text{range}(\bPi_3) = \text{Ker}\left(\bA_-\right).
\label{eq:prop-Pi3}
\end{equation}
Moreover, the new preconditioned operators satisfy the spectral bounds :
\begin{eqnarray}
\lambda_{\min}(\bHtwo \bA_+) \leq &\lambda( \bHtwo \bA \bPi_3 ) &\leq \lambda_{\max}(\bHtwo \bA_+) \text{ if } \lambda(\bHtwo \bA \bPi_3) \neq 0, \label{eq:H2APi3}\\ 
\min(1, \lambda_{\min}(\bHtwo \bA_+)  ) \leq & \lambda(\bH_{3,\mathrm{hyb}} \bA   ) &\leq \max(1, \lambda_{\max}(\bHtwo \bA_+)  ),  \label{eq:H3hybA} \\ 
 \min (1, \lambda_{\min}(\bHtwo \bA_+))  \leq &\lambda( \bH_{3,\mathrm{ad}} \bA    ) &\leq  (\lambda_{\max}(\bHtwo \bA_+) + 1) \label{eq:H3adA} .
\end{eqnarray}
where we recall that $\bHtwo$ can be chosen as one of the two-level preconditioners from Section~\ref{subs:def-H2} in such a way that $\lambda_{\min}(\bHtwo \bA_+)$ and  $\lambda_{\max}(\bHtwo \bA_+)$ are known and controlled by the choice of the coarse space. 
\end{theorem}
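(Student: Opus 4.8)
The plan is to read all three bounds off the geometry of $\bPi_3$, so I would pin that down first. Writing $\bQ:=\bW(\bW^\top\bA\bW)^{-1}\bW^\top$ one has $\bPi_3=\matid-\bQ\bA$ and $\matid-\bPi_3=\bQ\bA$; since $\bQ\bA\bW=\bW$ this gives $\bPi_3^2=\bPi_3$ at once, and $\bA\bPi_3=\bA-\bA\bQ\bA$ is symmetric, so $\bPi_3$ is an $\bA$-orthogonal projection. Its kernel is $\range(\bW)$, which equals $\range(\bA_+^{-1}\bV_-)=\range(\bA^{-1}\bV_-)$ by the choice of $\bW$ (Definition~\ref{def:Pi3H3}) and by Lemma~\ref{lem:rangeinvAV-}; this is the first identity in \eqref{eq:prop-Pi3}. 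For the range, $\range(\bPi_3)=\operatorname{Ker}(\matid-\bPi_3)=\{\bx:\bW^\top\bA\bx=\mathbf 0\}$, and since $\range(\bW)=\range(\bA^{-1}\bV_-)$ the equation $\bW^\top\bA\bx=\mathbf 0$ is equivalent to $\bV_-^\top\bx=\mathbf 0$, i.e.\ to $\bx\in\operatorname{Ker}(\bV_-^\top)=\operatorname{Ker}(\bA_-)$ (using that $\bV_-$ has full column rank and $\bL_-$ is nonsingular, from Definition~\ref{def:V-L-}); this is the second identity in \eqref{eq:prop-Pi3}. Put $U:=\operatorname{Ker}(\bA_-)=\range(\bPi_3)$. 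Two consequences of this picture are the keystone of everything below. First, $\range(\bPi_3)=U=\operatorname{Ker}(\bA_-)$ forces $\bA_-\bPi_3=\mathbf 0$, hence $\bA\bPi_3=\bA_+\bPi_3$ and (transposing) $\bPi_3^\top\bA=\bPi_3^\top\bA_+$. Second, $\bPi_3$ is \emph{also} the $\bA_+$-orthogonal projection onto $U$: its kernel $\range(\bW)=\range(\bA_+^{-1}\bV_-)$ is exactly the $\bA_+$-orthogonal complement $\{\bz:\bA_+\bz\in\range(\bV_-)\}$ of $U$, so $\bPi_3$ projects onto $U$ along $U^{\perp_{\bA_+}}$; in particular $\bA_+\bPi_3=\bPi_3^\top\bA_+$ and $\|\bPi_3\bv\|_{\bA_+}\le\|\bv\|_{\bA_+}$ for every $\bv$.

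Next I would record that, since $\bHtwo$ is spd, the bound on the eigenvalues of $\bHtwo\bA_+$ is equivalent to $\lambda_{\min}(\bHtwo\bA_+)\langle\bx,\bA_+^{-1}\bx\rangle\le\langle\bx,\bHtwo\bx\rangle\le\lambda_{\max}(\bHtwo\bA_+)\langle\bx,\bA_+^{-1}\bx\rangle$ for all $\bx$, and (replacing $\bx$ by $\bA_+\bx$) to $\lambda_{\min}(\bHtwo\bA_+)\langle\bx,\bA_+\bx\rangle\le\langle\bA_+\bx,\bHtwo\bA_+\bx\rangle\le\lambda_{\max}(\bHtwo\bA_+)\langle\bx,\bA_+\bx\rangle$. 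For \eqref{eq:H2APi3}: using $\bA\bPi_3=\bA_+\bPi_3$ I rewrite $\bHtwo\bA\bPi_3=\bHtwo\bA_+\bPi_3$, whose nonzero eigenvalues are those of $\bPi_3\bHtwo\bA_+$; since $\range(\bPi_3\bHtwo\bA_+)\subseteq U$, any eigenvector for a nonzero eigenvalue lies in $U$, and on $U$ the map $\bx\mapsto\bPi_3\bHtwo\bA_+\bx$ is self-adjoint for $\langle\cdot,\cdot\rangle_{\bA_+}$ (from $\bA_+\bPi_3=\bPi_3^\top\bA_+$ and symmetry of $\bA_+\bHtwo\bA_+$), with Rayleigh quotient $\langle\bA_+\bx,\bHtwo\bA_+\bx\rangle/\langle\bx,\bA_+\bx\rangle$; the latter reformulation above (valid on $\mathbb R^n\supseteq U$) confines this quotient, hence every such eigenvalue, to $[\lambda_{\min}(\bHtwo\bA_+),\lambda_{\max}(\bHtwo\bA_+)]$. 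For \eqref{eq:H3hybA}: from $\bPi_3^\top\bA=\bA\bPi_3=\bA_+\bPi_3$ and $\bQ\bA=\matid-\bPi_3$ one gets $\bH_{3,\mathrm{hyb}}\bA=\bPi_3\bHtwo\bA_+\bPi_3+(\matid-\bPi_3)$; this operator sends $U=\range(\bPi_3)$ into itself, where it acts as the operator just analysed, and sends $\operatorname{Ker}(\bPi_3)=\range(\matid-\bPi_3)$ into itself, where it is the identity, so with respect to $\mathbb R^n=U\oplus\operatorname{Ker}(\bPi_3)$ it is block diagonal and its spectrum lies in $[\lambda_{\min}(\bHtwo\bA_+),\lambda_{\max}(\bHtwo\bA_+)]\cup\{1\}$, which is \eqref{eq:H3hybA}.

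For \eqref{eq:H3adA} I would use that $\bH_{3,\mathrm{ad}}$ is spd, so its preconditioned eigenvalues are the extreme values of $\bigl(\langle\bx,\bHtwo\bx\rangle+\langle\bx,\bQ\bx\rangle\bigr)/\langle\bx,\bA^{-1}\bx\rangle$. For the upper bound: $\bA=\bA_+-\bA_-$ with $\bA_-$ spsd gives $\langle\bx,\bA_+^{-1}\bx\rangle\le\langle\bx,\bA^{-1}\bx\rangle$, while $\bA^{1/2}\bQ\bA^{1/2}$ is an $\ell_2$-orthogonal projection so $\langle\bx,\bQ\bx\rangle\le\langle\bx,\bA^{-1}\bx\rangle$; together with $\langle\bx,\bHtwo\bx\rangle\le\lambda_{\max}(\bHtwo\bA_+)\langle\bx,\bA_+^{-1}\bx\rangle$ this yields the upper bound $\lambda_{\max}(\bHtwo\bA_+)+1$. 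For the lower bound: set $\bv:=\bA^{-1}\bx$, $\bv_0:=\bPi_3\bv\in U$, $\bv_1:=(\matid-\bPi_3)\bv$; $\bA$-orthogonality of $\bPi_3$ gives $\langle\bx,\bA^{-1}\bx\rangle=\|\bv_0\|_\bA^2+\|\bv_1\|_\bA^2$ and $\langle\bx,\bQ\bx\rangle=\langle\bv,(\matid-\bPi_3)\bv\rangle_\bA=\|\bv_1\|_\bA^2$; the Woodbury identity \eqref{eq:Woodbury} shows $(\bA^{-1}-\bA_+^{-1})\bx\in\range(\bA_+^{-1}\bV_-)=\operatorname{Ker}(\bPi_3)$, so $\bPi_3\bA_+^{-1}\bx=\bPi_3\bv=\bv_0$; then $\bA_+$-orthogonality of $\bPi_3$ gives $\|\bv_0\|_{\bA_+}^2\le\|\bA_+^{-1}\bx\|_{\bA_+}^2=\langle\bx,\bA_+^{-1}\bx\rangle$, and $\|\bv_0\|_\bA=\|\bv_0\|_{\bA_+}$ since $\bv_0\in U$, whence $\langle\bx,\bHtwo\bx\rangle\ge\lambda_{\min}(\bHtwo\bA_+)\langle\bx,\bA_+^{-1}\bx\rangle\ge\lambda_{\min}(\bHtwo\bA_+)\|\bv_0\|_\bA^2$. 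Adding $\|\bv_1\|_\bA^2$ and using $a\,\alpha+\beta\ge\min(1,a)(\alpha+\beta)$ gives $\langle\bx,\bH_{3,\mathrm{ad}}\bx\rangle\ge\min(1,\lambda_{\min}(\bHtwo\bA_+))\langle\bx,\bA^{-1}\bx\rangle$.

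The step I expect to be the main obstacle is establishing, and then using, that $\bPi_3$ is the orthogonal projection onto $U$ for \emph{both} inner products $\langle\cdot,\cdot\rangle_\bA$ and $\langle\cdot,\cdot\rangle_{\bA_+}$, together with the fact that the Woodbury correction $\bA^{-1}-\bA_+^{-1}$ maps into $\operatorname{Ker}(\bPi_3)$. This double-orthogonality is precisely what makes the additive lower bound $\min(1,\lambda_{\min}(\bHtwo\bA_+))$ as sharp as the hybrid one; without it one would only recover the generic, weaker additive bound featuring the coloring constant, as in the theorem for $\bH_{+,\mathrm{ad}}\AS(\tau_\flat)$. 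Everything else is routine: the two-sided quadratic-form reformulation of spectral bounds, the Rayleigh-quotient and block-diagonal arguments, and the comparisons $\langle\bx,\bA_+^{-1}\bx\rangle\le\langle\bx,\bA^{-1}\bx\rangle$ and $\langle\bx,\bQ\bx\rangle\le\langle\bx,\bA^{-1}\bx\rangle$.
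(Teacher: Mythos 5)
Your proof is correct, and in several places it takes a genuinely different route from the paper's. A brief comparison of where the approaches diverge:

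For the projector, you make explicit the observation that $\bPi_3$ is simultaneously the $\bA$-orthogonal \emph{and} $\bA_+$-orthogonal projection onto $U=\operatorname{Ker}(\bA_-)$ (because $\operatorname{Ker}(\bPi_3)=\range(\bA_+^{-1}\bV_-)=U^{\perp_{\bA_+}}$), which the paper does not state; the paper only records $\bA_+\bPi_3=\bA\bPi_3$. For the bound \eqref{eq:H2APi3}, you pass to $\bPi_3\bHtwo\bA_+$ and use self-adjointness in the $\bA_+$-inner product on $U$ plus a Rayleigh-quotient argument, whereas the paper works directly with an eigenpair of $\bHtwo\bA_+\bPi_3$ and inner-product manipulations; both are equivalent. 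For \eqref{eq:H3hybA}, your identity $\bH_{3,\mathrm{hyb}}\bA=\bPi_3\bHtwo\bA_+\bPi_3+(\matid-\bPi_3)$ and the ensuing block decomposition $\mathbb R^n=U\oplus\operatorname{Ker}(\bPi_3)$ give a very clean argument; the paper instead adds quadratic forms. The biggest divergence is the additive lower bound \eqref{eq:H3adA}: you bound $\langle\bx,\bH_{3,\mathrm{ad}}\bx\rangle$ from below directly, using $\bv=\bA^{-1}\bx$, the split $\bv=\bv_0+\bv_1$, the Woodbury identity to get $\bv_0=\bPi_3\bA_+^{-1}\bx$, and $\bA_+$-contractivity of $\bPi_3$. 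The paper instead invokes the abstract Schwarz stable-splitting lemma of Toselli--Widlund (cited as a black box) with the splitting $\bz_+=\bPi_3\bx$, $\bz_-=(\bW^\top\bA\bW)^{-1}\bW^\top\bA\bx$, and estimates $\langle\bz_+,\bHtwo^{-1}\bz_+\rangle+\langle\bz_-,\bW^\top\bA\bW\bz_-\rangle\le\max(1,\lambda_{\min}(\bHtwo\bA_+)^{-1})\langle\bx,\bA\bx\rangle$ using only $\bA_+\bPi_3=\bA\bPi_3$ and $\bA$-orthogonality. The two routes are dual (your estimate is on $\langle\bx,\bH\bx\rangle$, the paper's is on $\langle\bx,\bH^{-1}\bx\rangle$ via the stable-splitting characterization) and both yield the same sharp constant $\min(1,\lambda_{\min}(\bHtwo\bA_+))$. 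Your approach is more self-contained; the paper's is shorter because it leans on the standard abstract Schwarz lemma. One caveat on your closing remark: the paper already achieves the tight additive lower bound without the $\bA_+$-orthogonality observation, so that observation is a feature of your route rather than a necessity of the result.
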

Note that a bound for the projected and preconditioned operator $( \bHtwo \bA \bPi_3 )$ has also been included in the theorem (equation \eqref{eq:H2APi3}). 
\begin{proof}
We begin by proving \eqref{eq:prop-Pi3}. Let $\bx \in \mathbb R^n$, $\bx$ is in the kernel of $\bPi_3$ if:
\[
\bPi_3 \bx = 0 \Leftrightarrow \bx= \bW (\bW^\top \bA \bW)^{-1} \bW^\top \bA \bx \Leftrightarrow \bx \in \text{range}(\bW) = \text{range}(\bA_+^{-1}\bV_-) = \text{range}(\bA^{-1}\bV_-).
\]
The last equality comes from Lemma~\ref{lem:rangeinvAV-}. Moreover, $\bPi_3$ is an $\bA$-orthogonal projection so 
\[
\text{range}(\bPi_3) = \left( \text{Ker}(\bPi_3) \right)^{\perp^\bA} = \left( \text{range}(\bA^{-1}\bV_-) \right)^{\perp^\bA} = \left( \text{range}(\bV_-) \right)^{\perp^{\ell_2}}= \text{Ker}\left(\bV_-^\top \right) =  \text{Ker}\left(\bA_-\right), 
\] 
by definition of $\bV_-$ in Definition~\ref{def:V-L-}. A direct consequence of this result, that is frequently used in the remainder of the proof, is the identity $\bA_+ \bPi_3 = \bA \bPi_3$. 

We now move onto proving the spectral bounds starting with \eqref{eq:H2APi3} for the projected and preconditioned operator $\bHtwo \bA \bPi_3$. Let $(\lambda, \by) \in \mathbb R \times \mathbb R^n$ be an eigenpair of the matrix $\bHtwo \bA \bPi_3 (= \bHtwo \bA_+ \bPi_3)$ meaning that:
\[
 \by \neq \mathbf 0 \text{ and } \bHtwo \bA_+ \bPi_3 \by = \lambda \by.
\]
Taking the inner product by $ \bA_+ \bPi_3 \by $ gives
\[
\langle \bA_+ \bPi_3 \by , \bHtwo \bA_+ \bPi_3 \by \rangle = \lambda \langle \bA_+ \bPi_3 \by, \by \rangle = \lambda \langle \bA \bPi_3 \by, \by \rangle =  \lambda \langle \bA \bPi_3 \by, \bPi_3 \by \rangle   =  \lambda \langle \bA_+ \bPi_3 \by, \bPi_3 \by \rangle.   
\]
Moreover, since $\bA_+$ and $\bHtwo$ are spd, the spectral bound for $\bHtwo \bA_+$ is equivalent to
\begin{equation}
\label{eq:intermA+H2}
\lambda_{\min} (\bHtwo \bA_+) \langle \bx, \bA_+ \bx \rangle \leq  \langle \bx, \bA_+ \bHtwo \bA_+ \bx \rangle \leq  \lambda_{\max} (\bHtwo \bA_+) \langle \bx, \bA_+ \bx \rangle, \quad \forall \, \bx \in \mathbb R^n. 
\end{equation}
In particular, for $\bx = \bPi_3\by$ we get 
\[
\lambda_{\min} (\bHtwo \bA_+) \langle \bPi_3 \by, \bA_+ \bPi_3 \by \rangle \leq  \underbrace{\langle\bPi_3 \by, \bA_+ \bHtwo \bA_+ \bPi_3 \by \rangle}_{=\lambda \langle \bA_+ \bPi_3 \by, \bPi_3 \by \rangle  } \leq  \lambda_{\max} (\bHtwo \bA_+) \langle \bPi_3 \by, \bA_+ \bPi_3 \by \rangle. 
\]
Finally, there are two possibilities, either $\langle \bA_+ \bPi_3 \by, \bPi_3 \by \rangle = 0$, so $\by \in \operatorname{Ker}(\bPi_3)$ and $\lambda = 0$, or $\langle \bA_+ \bPi_3 \by, \bPi_3 \by \rangle \neq 0$ and $\lambda \in [\lambda_{\min}(\bHtwo \bA_+), \lambda_{\max}(\bHtwo \bA_+)]$. In other words \eqref{eq:H2APi3} holds.

Next, we prove the spectral bound with the hybrid preconditioner $\bH_{3,\mathrm{hyb}}$ from \eqref{eq:H3hybA}. Let $\bx \in \mathbb R^n$, we add the term 
\[
\langle \bx, \bA \bW (\bW^\top \bA \bW)^{-1} \bW^\top \bA \bx \rangle = \langle \bx, \bA (\matid - \bPi_3) \bx \rangle   = \langle (\matid - \bPi_3) \bx, \bA (\matid - \bPi_3) \bx \rangle   
\]
to estimate \eqref{eq:intermA+H2} evaluated at $\bPi_3 \bx$ and obtain
\begin{align*}
\lambda_{\min} (\bHtwo \bA_+) \langle \bPi_3 \bx, \bA \bPi_3 \bx \rangle + &\langle (\matid - \bPi_3) \bx, \bA (\matid - \bPi_3) \bx \rangle    \leq \\
 \langle \bPi_3 \bx, \bA \bHtwo &\bA \bPi_3 \bx \rangle + \langle \bx, \bA \bW (\bW^\top \bA \bW)^{-1} \bW^\top \bA \bx \rangle   \leq \\
& \lambda_{\max} (\bHtwo \bA_+) \langle \bPi_3 \bx, \bA \bPi_3 \bx \rangle + \langle (\matid - \bPi_3) \bx, \bA (\matid - \bPi_3) \bx \rangle  
\end{align*}
where $\bA_+ \bPi_3 = \bA \bPi_3$ as also been applied. This then implies that 
\[
\min(1, \lambda_{\min}(\bHtwo \bA_+)  ) \langle \bx, \bA \bx \rangle \leq  \langle\bx, \bA \bH_{3,\mathrm{hyb}} \bA\bx \rangle \leq  \max(1, \lambda_{\max}(\bHtwo \bA_+)  ) \langle \bx, \bA \bx \rangle . 
\]
and the eigenvalue estimate in the theorem holds because $\bA$ and $\bH_{3,\mathrm{hyb}}$ are spd.

Finally, we prove the spectral bound with the additive preconditioner $\bH_{3,\mathrm{ad}}$ from \eqref{eq:H3hybA}. Matrices $\bA_+$ and $\bHtwo$ are both spd so the fact that all eigenvalues are not greater than $\lambda_{\max}(\bHtwo \bA_+)$ is equivalent to
\[
\langle \bx, \bHtwo \bx \rangle \leq \lambda_{\max}(\bHtwo \bA_+) \langle \bx, {\bA_+}^{-1} \bx \rangle, \text{ for any } \bx \in \mathbb R^n.
\]
Moreover, $\langle \bx, {\bA_+}^{-1} \bx \rangle \leq \langle \bx, {\bA}^{-1} \bx \rangle  \text{ for any } \bx \in \mathbb R^n$ so 
\[
\langle \bx, \bHtwo \bx \rangle \leq \lambda_{\max}(\bHtwo \bA_+) \langle \bx, {\bA}^{-1} \bx \rangle, \text{ for any } \bx \in \mathbb R^n.
\]
It also holds that
\[
\langle \bx, \bA \bW (\bW^\top \bA \bW)^{-1} \bW^\top \bA \bx \rangle = \langle (\matid - \bPi_3) \bx, \bA (\matid - \bPi_3) \bx \rangle  \leq \langle \bx, \bA \bx \rangle , \text{ for any } \bx \in \mathbb R^n, 
\] 
or equivalently, 
\[
\langle \bx, \bW (\bW^\top \bA \bW)^{-1} \bW^\top \bx \rangle  \leq \langle \bx, \bA^{-1} \bx \rangle , \text{ for any } \bx \in \mathbb R^n. 
\] 
Adding the last two results together gives us
\[
\langle \bx, \bH_{3,\mathrm{ad}} \bx \rangle \leq (\lambda_{\max}(\bHtwo \bA_+) + 1)\langle \bx, {\bA}^{-1} \bx \rangle, \text{ for any } \bx \in \mathbb R^n,
\]
or in other words, all eigenvalues of $\bH_{3,\mathrm{ad}} \bA$ are not greater than $ (\lambda_{\max}(\bHtwo \bA_+) + 1)$.
For the smallest eigenvalue of $\bH_{3,\mathrm{ad}} \bA$, we can look at $\bH_{3,\mathrm{ad}}$ in the abstract Schwarz framework. Indeed,
\[
\bH_{3,\mathrm{ad}} = \matid \bHtwo \matid + \bW (\bW^\top \bA \bW)^{-1} \bW^\top, 
\]
is an abstract Schwarz solver for $2$ subspaces $\mathbb R^n$ and $\mathbb R^{n_-}$ with prolongation operators $\matid$ (identity matrix in $\mathbb R^n$) and $\bW$, and with local solvers $\bHtwo$ and $ (\bW^\top \bA \bW)^{-1} $. We know that $\bHtwo$ is spd so the classical stable splitting result from Toselli Widlund \cite{ToselliWidlund_book2005} applies (or from \cite{AbstractGenEO}): all eigenvalues of $\bH_{3,\mathrm{ad}} \bA$ are larger than $C_0^{-2}$ if, for any $\bx \in \mathbb R^n$, there exist $\bz_+ \in \mathbb R^n$ and $\bz_- \in \mathbb R^{n_-}$ that satisfy
\[
\bz_+ + \bW \bz_- = \bx \text{ and }\langle \bz_+, \bHtwo^{-1} \bz_+ \rangle + \langle \bz_-, \bW^\top \bA \bW \bz_- \rangle \leq C_0^2 \langle \bx, \bA \bx \rangle. 
\] 
The following splitting is proposed: $\bz_+ =  \bPi_3 \bx$ and $\bz_- = (\bW^\top \bA \bW)^{-1} \bW^\top \bA \bx$. We first check that they do split $\bx$ :
\[
\bz_+ + \bW \bz_- = (\matid - \bPi_3) \bx + \bW (\bW^\top \bA \bW)^{-1} \bW^\top \bA \bx = \bx. 
\]
We then check the stability of the splitting:
\begin{align*}
\langle \bz_+, \bHtwo^{-1} \bz_+ \rangle + \langle \bz_-, \bW^\top \bA \bW \bz_- \rangle  & = \langle \bPi_3 \bx , \bHtwo^{-1} \bPi_3 \bx \rangle + \langle (\bW^\top \bA \bW)^{-1} \bW^\top \bA \bx, \bW^\top \bA \bW (\bW^\top \bA \bW)^{-1} \bW^\top \bA \bx \rangle \\
& \leq (\lambda_{\min}(\bHtwo \bA_+))^{-1}  \langle \bPi_3 \bx , \bA_+ \bPi_3 \bx \rangle + \langle (\matid - \bPi_3) \bx , \bA (\matid - \bPi_3) \bx \rangle\\ 
& \leq (\lambda_{\min}(\bHtwo \bA_+))^{-1}  \langle \bPi_3 \bx , \bA \bPi_3 \bx \rangle + \langle (\matid - \bPi_3) \bx , \bA (\matid - \bPi_3) \bx \rangle \\
& \leq  \max (1, \lambda_{\min}(\bHtwo \bA_+)^{-1})  \langle  \bx , \bA \bx \rangle .
\end{align*}
Finally, we have proved that all eigenvalues of $\bH_{3,\mathrm{ad}}$ are greater than or equal to $ \min (1, \lambda_{\min}(\bHtwo \bA_+))$.
\end{proof}

Again, the AWG preconditioners $\bH_{3,\mathrm{ad}}$ and $\bH_{3,\mathrm{hyb}}$ are families of preconditioners that are computed algebraically and lead to guaranteed spectral bounds when applied to solving $\bA \bx = \mathbf b$. The condition numbers can be made smaller by enriching the coarse space in $\bH_2$.

\subsection{Remarks on the implementation of the AWG preconditioners}
\label{subs:tricks}

Below, some important remarks are made about the implementation of $\bH_{3,\mathrm{ad}}$ and $\bH_{3,\mathrm{hyb}}$. 

\begin{enumerate}
\item It is not necessary to diagonalize $\bA_-$ as suggested by the definition of $\bV_-$ in Definition~\ref{def:V-L-}. Indeed, Definition~\ref{def:Pi3H3} requires only a basis $\bW$ of $ \range (\bA_+^{-1} \bV_-)$ to generate the second coarse space. A natural choice is to recall that $\bV_-$ is generated by the eigenvectors of $\bB\s$ that correspond to negative eigenvalues (once prolongated to $\Omega$ by ${\bR\s}^\top$). If the matrices $\bB\s$ are non-singular then the range of $ (\bA_+^{-1} \bV_-)$ is also generated by 
\[
\bW = \bA_+^{-1} [{\bR^1}^\top \bV^1_- \,|\, \dots \,|\, {\bR^N}^\top \bV^N_- ], 
\]
with the $\bV^s_-$ from Definition~\ref{def:As+As-}. If the $\bB\s$ are singular, it is necessary to first remove the columns in $\bV\s_-$ that correspond to zero eigenvalues. It may also be necessary (although we haven't observed it in practice) to remove some linear dependencies between the columns. This is rather standard and can be done either when computing $\bW$ or when factorizing the coarse problem $\bW^\top \bA \bW$.
\item The computation of $\bW$ is one of the bottlenecks of the algorithm: many systems must be solved for the global matrix $\bA_+$. In our current implementation these linear systems are solved one after the other with PCG preconditioned by $\bH_2$. Since $\bH_2$ is a \textit{good} preconditioner for $\bA_+$ this takes \textit{few} iterations. It must be explored whether computational efficiency could be improved with block CG methods \cite{o1980block} or adaptive multipreconditioning \cite{spillane2016adaptive}.  
\item Following Remark~\ref{rem:gevpinvMAMB}, all four choices of preconditioners $\bH_2$ for $\bA_+$ that are considered in the article require that the action of $(\bA\s_+)^\dagger$ be implemented in order to compute the corresponding GenEO coarse space. Instead of computing the full diagonalization of $\bB\s$, it is sufficient to compute its negative eigenvalues and corresponding orthonormalized set of eigenvectors (\textit{i.e.}, $\bL\s_-$ and $\bV\s_-$ from Definition~\ref{def:As+As-}) and to recall that 
\[
\bA\s_+ = (\matid - \bV\s_- {\bV\s_-}^\top) \bB\s (\matid - \bV\s_- {\bV\s_-}^\top)
\] 
which also implies that 
\[
{\bA\s_+}^\dagger = (\matid - \bV\s_- {\bV\s_-}^\top) {\bB\s}^\dagger (\matid - \bV\s_- {\bV\s_-}^\top).
\]
Since $\bB\s$ is symmetric, it can be factorized using MUMPS \cite{MUMPS:1,MUMPS:2}.
\item Following Remark~\ref{rem:gevpinvMAMB}, all four choices of preconditioners $\bH_2$ for $\bA_+$ that are considered in the article require that the action of $\bR\s \bA_+ {\bR\s}^\top$ be implemented in order to compute the corresponding GenEO coarse space.  As $\bR\s \bA_+ {\bR\s}^\top$ is a dense matrix, it is never assembled. Instead, the action of $\bR\s \bA_+ {\bR\s}^\top$ is computed as
\[
\bR\s \bA_+ {\bR\s}^\top = \bR\s \bA {\bR\s}^\top + \bR\s \bA_- {\bR\s}^\top = \bR\s \bA {\bR\s}^\top - \sum_{t=1}^N  \bR\s {\bR^t}^\top  \bV^t_- \bL^t_- {\bV^t_-}^\top   \bR^t   {\bR\s}^\top,
\] 
where again $\bL^t_-$ and $\bV^t_-$ are the ones from Definition~\ref{def:As+As-}.
In the sum, all terms for which $\bR^t   {\bR\s}^\top$ is zero are zero. 
\item If $ \bH_+\AS  := \sum_{s=1}^N {\bR\s}^\top (\bR\s \bA_+ {\bR\s}^\top)^{-1} \bR\s$ is chosen as a one-level preconditioner for $\bA_+$ then it is necessary to compute the action of $(\bR\s \bA_+ {\bR\s}^\top)^{-1}$. This is done by applying the Woodbury matrix identity to the formula just above. 

\end{enumerate}

\section{Numerical Results}
\label{sec:Numerical}

In this section, numerical results are presented for the new AWG preconditioners with exact coarse spaces: $\bH_{3,\mathrm{ad}}$ and $\bH_{3,\mathrm{hyb}}$. The theoretical convergence bounds are checked, and the behaviour of the new preconditioners is illustrated for the first time. Some comparisons to non-algebraic domain decomposition preconditioners with more standard GenEO coarse spaces are performed. The linear systems that are considered result from discretizing a two-dimensional linear elasticity problem with $Q_1$ finite elements. All details are given below. 

\begin{remark}
The AWG preconditioner with inexact coarse space has not been included into the numerical study but some numerical results can be found in \cite{spillane:hal-03187092}. The behaviour of $\bH_{3,\mathrm{inex}}$ is not expected to differ much from the behaviour of  $\bH_{3,\mathrm{ad}}$ and $\bH_{3,\mathrm{hyb}}$. In particular they all share the same coarse spaces and have very similar convergence bounds (or exactly the same in the case of $\bH_{3,\mathrm{hyb}}$). In the future, when CPU time is considered, $\bH_{3,\mathrm{inex}}$ should be included in the comparison. 
\end{remark}

All the results presented were obtained with petsc4py \cite{dalcin2011parallel}, a Python port to the PETSc libraries \cite{petsc-web-page,petsc-user-ref,petsc-efficient}.  The eigensolves are performed by SLEPc \cite{Hernandez:2005:SSF} and the matrix factorizations (for the local and coarse problems) are performed by MUMPS \cite{MUMPS:1,MUMPS:2}. Our code is available on Github \cite{ourcode}.

Let $\omega = [0 , 3] \times [0,3]  \subset \mathbb R^2$  be the computational domain. Let $\partial \omega_D$ be the left hand side boundary of $\omega$ and let $\mathcal V =  \{\bv \in H^1(\omega)^2; \bv= \mathbf 0 \text{ on } \partial\omega_D\}$.  A solution $\bu  \in \mathcal V$ is sought such that
\begin{equation}
\label{eq:elasticity}
\int_{\omega} 2 \mu \varepsilon(\bu) : \varepsilon(\bv) \, dx +  \int_{\omega} L \operatorname{div}(\bu) \operatorname{div}(\bv) \, dx  = \int_{\omega} \mathbf{g} \cdot \bv \, dx, \text{ for all } \bv \in \mathcal V,
\end{equation}
where, for $i,j=1,2$, $\varepsilon_{ij}(\bu) = \frac{1}{2}\left(\frac{\partial {u}_i}{\partial x_j}+\frac{\partial {u}_j}{\partial x_i}\right)$, $\delta_{ij}$ is the Kronecker symbol, $\mathbf g = (0,-9.81)^\top$ and the Lam\'e coefficients are functions of Young's modulus $E$ and Poisson's ratio $\nu$ : $  
\mu = \frac{E}{2(1+\nu)},\, L = \frac{E\nu}{(1+\nu)(1-2\nu)}$. It is well known (see, \textit{e.g.}, \cite{pechstein2011analysis}) that the solution of \eqref{eq:elasticity} in a heterogeneous medium is challenging due to ill-conditioning. Unless otherwise specified, the coefficient distribution that is considered is the following: for any $(x,y) \in \omega$, 
\begin{equation}
\label{eq:distribE}
\nu(x,y) = 0.3 \quad  \text{ and } E(x,y) = \left\{\begin{array}{ll} 10^{11}& \text{ if } (\operatorname{floor}(y) - y) \in [1/7,2/7]\cup  [3/7,4/7],  \\ 10^7& \text{otherwise.}\end{array}   \right. 
\end{equation}

\begin{figure}
\begin{center}
\includegraphics[width=0.7\textwidth, trim = 4cm 4cm 4cm 4cm, clip]{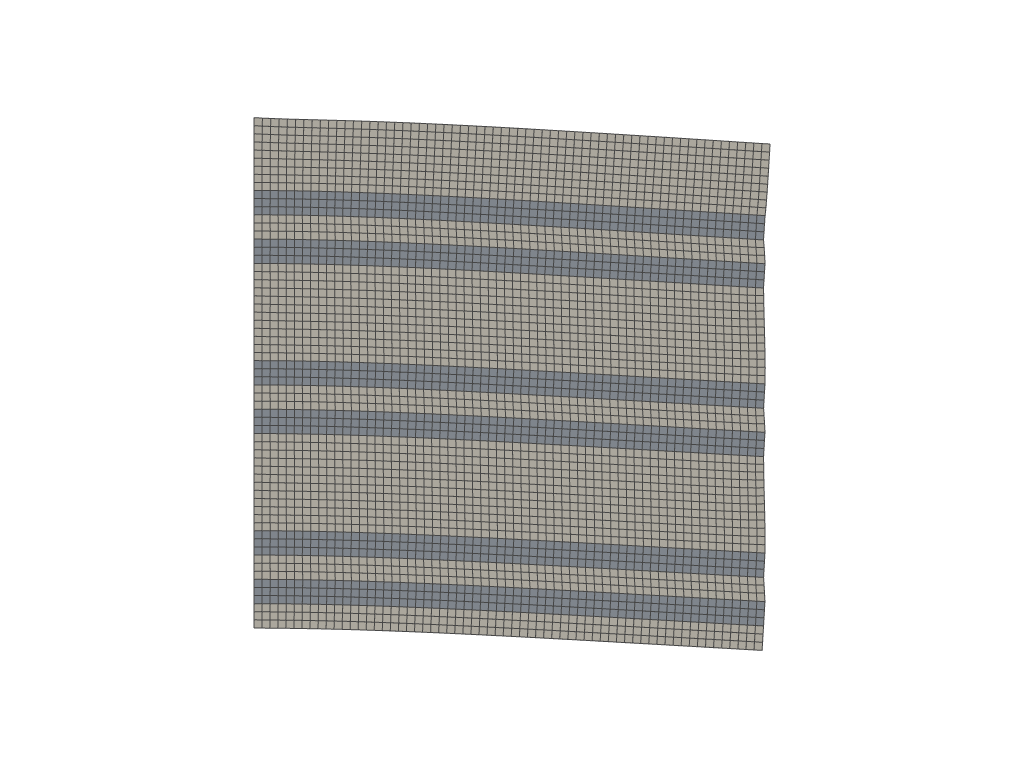} 
\end{center}
\caption{The computational domain $\omega$ has been displaced by $10^5$ multiplied by the solution $\bx_*$. The colors show the distribution of coefficients as defined in \eqref{eq:distribE}. Dark: $E = E_1 = 10^{11}$ - Light :  $E = E_2 = 10^{7}$. }
\label{fig:E-solution}
\end{figure}

The computational domain is discretized by a uniform mesh with element size $h = 1/21$ so there are  
\[
n = 8064 \text{ degrees of freedom (once removed the ones on $\partial \omega_D $)}. 
\] 
The boundary value problem is solved numerically with $Q_1$ finite elements. Let $\mathcal V_h$ be the space of $Q_1$ finite elements that satisfy the Dirichlet boundary condition. Let $\{\bphi_k\}_{k=1}^n$ be a basis of $\mathcal V_h$. The linear system that is to be solved is
\[
\text{Find } \bx_* \in \mathbb R^n \text{ such that } \bA \bx_* = \bb,
\] 
with $A_{ij} = \int_{\omega} \left[ 2 \mu \varepsilon(\bphi_i) : \varepsilon(\bphi_j) +  L \operatorname{div}(\bphi_i) \operatorname{div}(\bphi_j) \right] \, dx $ and $\bb_i = \int_{\omega} \mathbf{g} \cdot \bphi_i \, dx$. The mesh, the solution and the distribution of $E$ are represented in Figure~\ref{fig:E-solution}.

Unless otherwise specified, for each computation, the domain $\omega$ is split into 
\[
N = 9
\]
unit square subdomains that overlap only at the interface and this in turn gives the partition of the degrees of freedom into $\Omega^1$, \dots, $\Omega^9$. With $Q_1$ finite elements, for the overlap to be restricted to the shared subdomain boundaries is enough to ensure the minimal overlap condition. There are $504$ degrees of freedom that are shared by more than one subdomain. All linear systems, are solved with PCG up to a relative residual tolerance of $10^{-10}$. The preconditioner is specified for each test case. 

The matrix $\bA_+^{-1} \bV_-$ is computed by solving $n_-$ linear systems for $\bA_+^{-1}$. This is also done with PCG preconditioned by $\bH_2$ up to a relative residual tolerance of $10^{-10}$ (unless specified otherwise).

Except in the next paragraph, the AWG preconditioner under study is 
\[
\bH_{3,\mathrm{ad}} \text{ with } \bH_2 = \bH_{\mathrm{hyb}}\NN(\tau_\sharp) \text{ (from Definition~\ref{def:Pi3H3} and Theorem~\ref{th:H2NN})}.
\]

\paragraph{Comparison of $\bH_{3,\mathrm{ad}}$ and $\bH_{3,\mathrm{hyb}}$ for all variants of $\bH_2$}

The test case is solved with the eight AWG preconditioners (for fixed values of threshold). Specifically, there are two ways of incorporating the second coarse space leading to $\bH_{3,\mathrm{ad}}$ (additive) and $\bH_{3,\mathrm{hyb}}$ (hybrid) as well as, for each one, four choices for $\bH_2$: $\bH_{\mathrm{hyb}}\NN(\tau_\sharp) $, $\bH_{\mathrm{hyb}}\AS(\tau_\sharp,\tau_\flat)$, $\bH_{+,\mathrm{ad}}\AS(\tau_\flat)$ and $\bH_{+,\mathrm{hyb}}\AS(\tau_\flat)$. The thresholds for selecting eigenvalues in the GenEO coarse spaces are set to $\tau_\flat = 10$ and $\tau_\sharp = 0.1$. The results are shown in Table~\ref{tab:allprecs}. As a matter of comparison, results with more classical (non-algebraic) domain decomposition preconditioners with GenEO coarse spaces presented in \cite{AbstractGenEO}[Section 5] are also reported. 

In all lines of the table that correspond to AWG, the quantity $n_-$ (size of the second coarse space) is the same which is normal because the second coarse space depends only on $\bA$. The size $\#V_0$ of the GenEO coarse space also appears to be the same for all choices of $\bH_2$. For $\bH_{\mathrm{hyb}}\NN(\tau_\sharp) $, $\bH_{\mathrm{hyb}}\AS(\tau_\sharp,\tau_\flat)$, $\bH_{+,\mathrm{ad}}\AS(\tau_\flat)$, the same eigenvalue problem is being solved so as long as $\tau_\sharp = \tau_\flat^{-1}$ this was expected. For the last choice $\bH_2= \bH_{\mathrm{hyb}}\AS(\tau_\sharp,\tau_\flat)$, it is not entirely surprising that the size of the coarse space is not too different as there are connections between the GenEO eigenproblems but a small difference in size would not have surprised us either. 

For all AWG preconditioners, the extreme eigenvalues of the preconditioned operators behave as predicted. All AWG preconditioners reduce the condition number of the preconditioned operator to a very small value below $20$ with the result that convergence to $10^{-10}$ occurs in at most $31$ iterations. The GenEO coarse spaces constructed by AWG are of almost the same size as the classical (non algebraic) coarse spaces which is very satisfying. Of course the AWG preconditioners bear the cost of the extra coarse space.  

For $\bH_2 = \bH_{\mathrm{hyb}}\NN(\tau_\sharp)$ the first 20 non-zero eigenvalues computed for the GenEO eigenproblem in each subdomain are plotted in Figure~\ref{fig:PCNewNN-eigenvalues}. It appears that choosing larger values of $\tau_\sharp$ than $10^{-1}$ could increase significantly the size of the coarse space without much improving the condition number much as the eigenvalues are quite clustered. The eigenvectors that are selected for the coarse space are plotted in Figure~\ref{fig:PCNewNN-CV4}. Since the unknowns are displacements, they have been represented by applying the deformation to the subdomain. The colors show the values of $E$. The influence of the hard (darker colored) layers can be seen but it is not easy to make any conclusions about the eigenvectors. 

\begin{table}
\begin{center}
\begin{tabular}{lrrrrrr}
\hline            &   $\kappa$ &  $It$ &  $\lambda_{\min}$ &  $\lambda_{\max}$ & $\#V_0$ &  $n_- $\\
\hline
\multicolumn{7}{l}{New AWG preconditioners:}\\
\hline
  $\bH_{3,\mathrm{ad}}$ with $\bH_2 = \bH_{\mathrm{hyb}}\NN(0.1)$ &   9.09 &    26 &   1.0 &   9.1 &     57 &                 48 \\
  $\bH_{3,\mathrm{ad}}$ with $\bH_2 = \bH_{\mathrm{hyb}}\AS(0.1,10)$&  12.2 &    26 &   0.33 &   4.0 &     57 &                 48 \\
  $\bH_{3,\mathrm{ad}}$ with $\bH_2 = \bH_{+,\mathrm{hyb}}\AS(10)$  &  12.3 &    25 &   0.33 &   4.0 &     57 &                 48 \\
  $\bH_{3,\mathrm{ad}}$ with $\bH_2 = \bH_{+,\mathrm{ad}}\AS(10) $  &  16.8 &    31 &   0.24 &   4.0 &     57 &                 48 \\
  $\bH_{3,\mathrm{hyb}}$ with $\bH_2 = \bH_{\mathrm{hyb}}\NN(0.1)$   &   9.09 &    27 &   1.0 &   9.1 &     57 &                 48 \\
  $\bH_{3,\mathrm{hyb}}$ with $\bH_2 = \bH_{\mathrm{hyb}}\AS(0.1,10)$ &  12.1 &    25 &   0.33 &   4.0 &     57 &                 48 \\
  $\bH_{3,\mathrm{hyb}}$ with $\bH_2 = \bH_{+,\mathrm{hyb}}\AS(10)$   &  12.2 &    25 &   0.33 &   4.0 &     57 &                 48 \\
  $\bH_{3,\mathrm{hyb}}$ with $\bH_2 = \bH_{+,\mathrm{ad}}\AS(10) $   &  16.7 &    29 &   0.24 &   4.0 &     57 &                 48 \\
\hline
\multicolumn{7}{l}{Non-algebraic methods:}\\
\hline
      Hybrid AS + GenEO ($\tau =10$) &  26.5 &    43 &   0.15 &   4.0 &     55 &                  0 \\
     Additive AS + GenEO ($\tau =10$) &  50.0 &    58 &   0.080 &   4.0 &     55 &                  0 \\
       BNN with GenEO ($\tau =0.1$)&  11.1 &    29 &   1.0 &  11.1 &     55 &                  0 \\
  One-level AS &  34772 &  $>$ 150 &   0.000115 &   4.0 &      0 &                  0 \\

\hline
\end{tabular}
\end{center}
\caption{\label{tab:allprecs}Comparison between all AWG preconditioners for fixed $\tau_\sharp$ and $\tau_\flat$. Data is also included for classical GenEO and the one-level method.   $\kappa$: condition number of preconditioned operator, $It$: number of iterations, $\lambda_{\min}$: smallest eigenvalue of preconditioned operator,  $\lambda_{\max}$: largest eigenvalue of preconditioned operator, $\#V_0$: dimension of GenEO coarse space,  $n_- = \operatorname{rank}(\bA_-)$: dimension of second coarse space.}
\end{table}

\begin{figure}
\begin{center}
\includegraphics[width=0.7\textwidth]{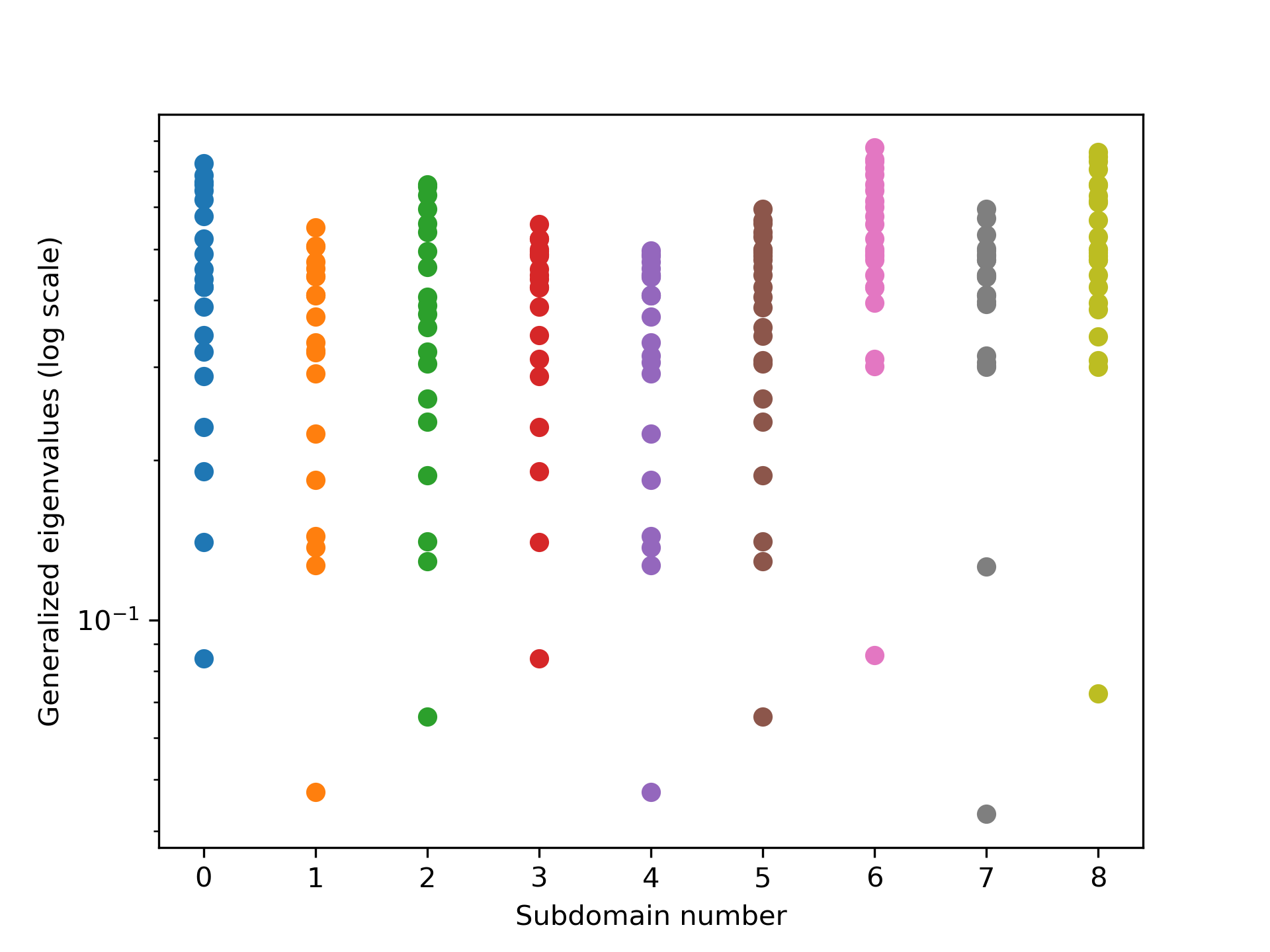}
\end{center}
\caption{For $\bH_2 = \bH_{\mathrm{hyb}}\NN(\tau_\sharp)$: 20 smallest non-zero eigenvalues of the GenEO eigenproblem in each subdomain}
\label{fig:PCNewNN-eigenvalues}
\end{figure}

\begin{figure}
\begin{center}
\includegraphics[width=0.8\textwidth]{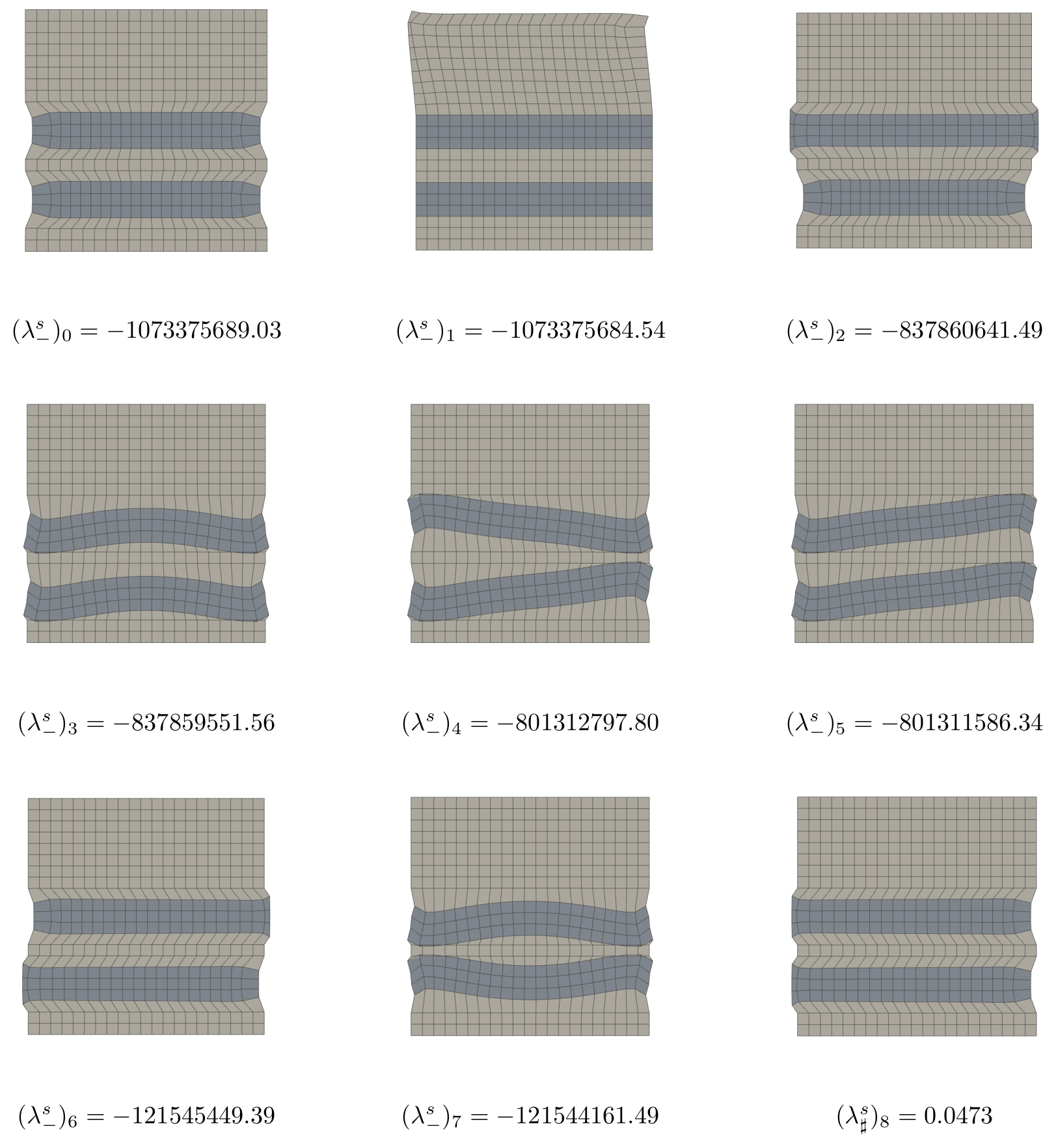}
\end{center}
\caption{For $\bH_2 = \bH_{\mathrm{hyb}}\NN(\tau_\sharp)$: plot of the 9 vectors that are selected for the coarse space in the central subdomain ($s=4$) represented as deformations. The colors correspond to the two values of $E$ (darker color is harder material). The first eight vectors correspond to a zero eigenvalue in the GenEO eigenproblem, \textit{i.e.}, to a negative eigenvalue $\lambda\s_-$ of $\bB\s$. The last vector corresponds to the first non-zero eigenvalue $(\lambda^s_\sharp)_8$ in the GenEO eigenproblem.}
\label{fig:PCNewNN-CV4}
\end{figure}

In all that follows we focus on the choice of preconditioner: 
\[
\bH_{3,\mathrm{ad}} \text{ with } \bH_2 = \bH_{\mathrm{hyb}}\NN(\tau_\sharp).
\]
With this choice of $\bH_2$, the theory predicts that the smallest eigenvalue of the preconditioned operator is larger than $1$ and this bound is observed to be sharp in Table~\ref{tab:allprecs} (and more generally throughout our numerical experiments). For this reason, we no longer report on the extreme eigenvalues. Instead we only give values of the condition number $\kappa$.

\paragraph{Influence of the threshold $\tau_\sharp$}

For this test we study the influence of $\tau_\sharp$. When $\tau_\sharp$ increases, more vectors are selected for the coarse space and the condition number bound decreases. AWG is compared to classical Neumann Neumann GenEO (which is not algebraic). Figure~\ref{fig:kappavsdimV0} is a plot of the condition number of the preconditioned operator versus the size of the GenEO coarse space. Recall that GenEO has the disadvantage of not being algebraic but the AWG method has the disadvantage of having a second coarse space (of size $48$). For AWG, the size of the coarse space cannot go below $n_-$ because the kernels of $\bA\s_-$ are always selected. The study was performed by running the simulations for $\tau_\sharp \in [0, 0.001, 0.01, 0.05, 0.1, 0.2, 0.5]$. Two values of Poisson's ratio $\nu$ are considered $\nu = 0.3$ and $\nu = 0.4$. For $\nu = 0.3$ the AWG coarse space to achieve a condition number of 10 is almost the same as with classical GenEO. When $\nu = 0.4$ more vectors are required for AWG. 

\begin{figure}
\begin{center}
\begin{tabular}{cc}
$\nu = 0.3$ & $\nu = 0.4$ \\
\includegraphics[width=0.49\textwidth]{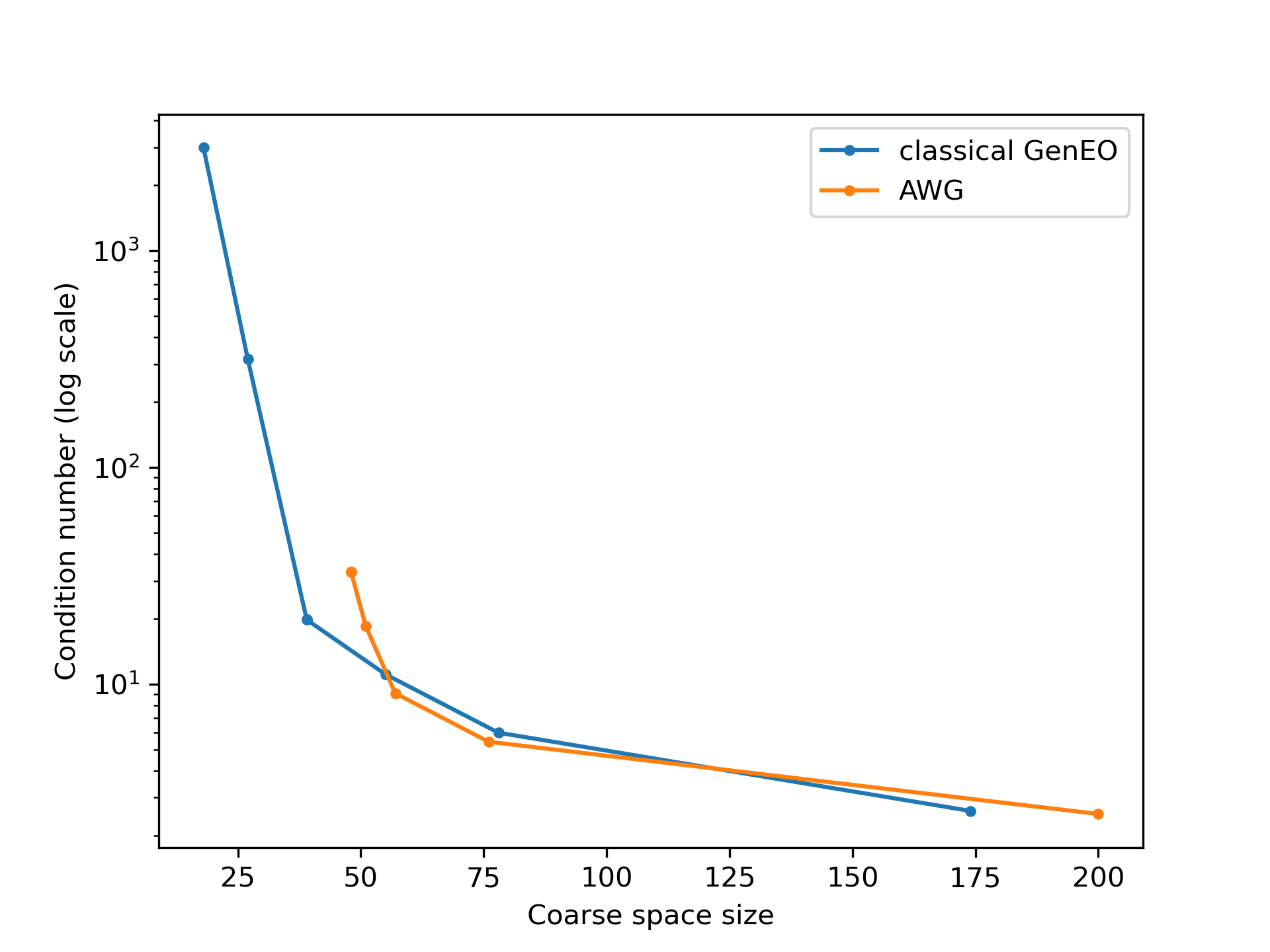}
&
\includegraphics[width=0.49\textwidth]{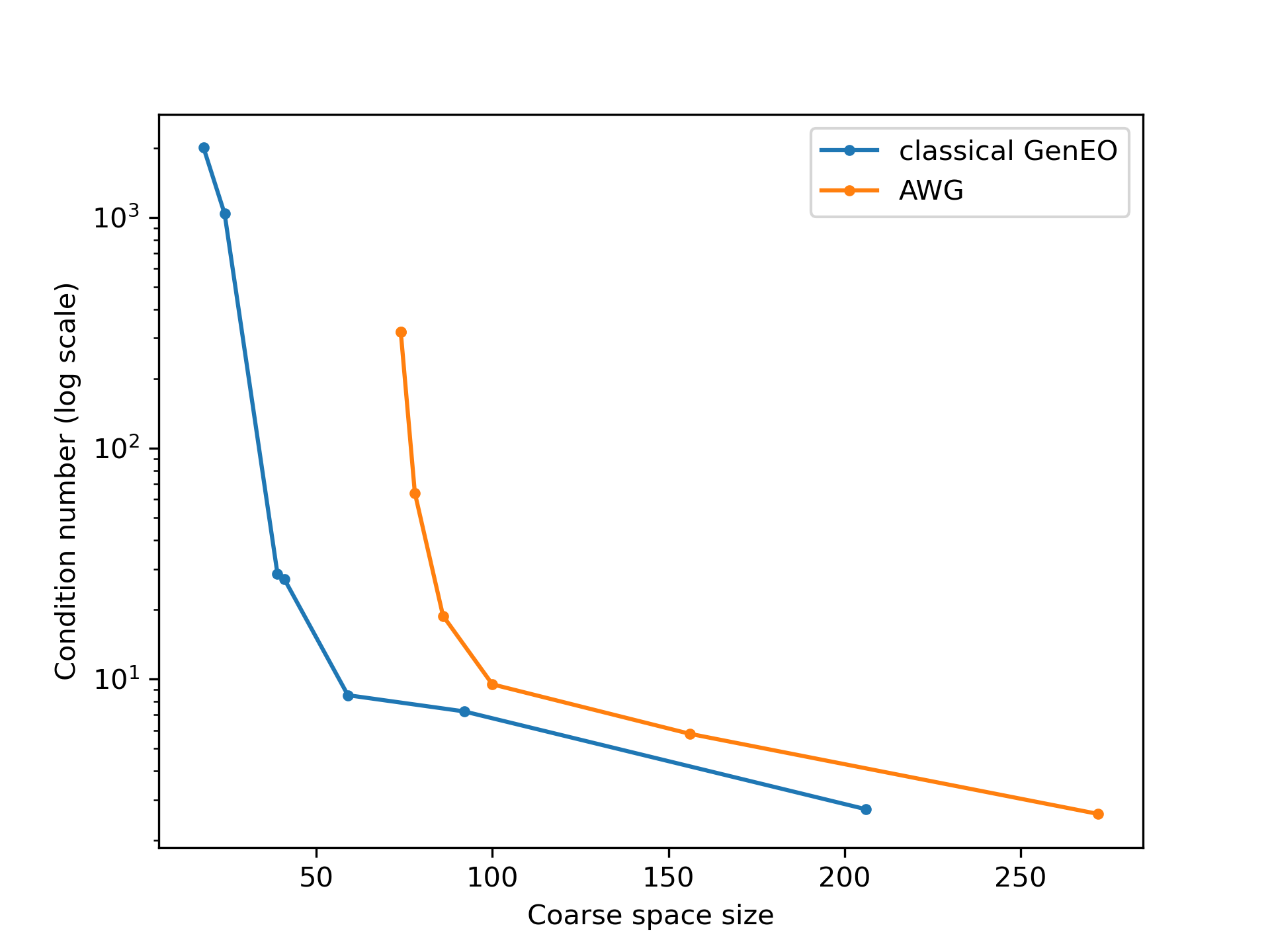}
\end{tabular}
\end{center}
\caption{\label{fig:kappavsdimV0} Condition number with respect to coarse space size for $\bH_{3,\mathrm{ad}}$ with $\bH_2 = \bH_{\mathrm{hyb}}\NN(\tau_\sharp)$ and the comparable classical GenEO coarse space. $\tau_\sharp \in [0, 0.001, 0.01, 0.05, 0.1, 0.2, 0.5]$. Left: $\nu = 0.3$. Right: $\nu = 0.4$.}
\end{figure}

\paragraph{Influence of Poisson's ratio $\nu$}

For this study, the value of $\nu$ varies between $0.2$ and $0.49$. Young's modulus is kept constant in the domain and equal to $E = 10^{11}$. The threshold is $\tau_\sharp = 0.05$ (so slightly smaller than previously). As shown in Table~\ref{tab:case2-nuvaries}, increasing $\nu$ has quite a dramatic effect on $n_-$ (even without going near the incompressible limit). This is rather disappointing. The classical (non algebraic) GenEO does not suffer from this problem (away from the incompressible limit $\nu \rightarrow 0.5$).

\begin{table}
\centering
\begin{tabular}{lrrrrr}
\multicolumn{5}{c}{AWG}\\
\hline
$ \nu$ &  $\kappa$ &  $It$ &  $\#V_0$ &  $n_- $\\
\hline
 0.20 &  19.7 &    33 &     21 &                 12 \\
 0.30 &  20.3 &    32 &     29 &                 19 \\
 0.35 &  18.6 &    32 &     47 &                 25 \\
 0.40 &  25.8 &    39 &     98 &                 70 \\
 0.45 &  27.1 &    29 &    115 &                110 \\
 0.49 &  16.8 &    25 &    362 &                357 \\
\hline
\end{tabular}
\qquad
\begin{tabular}{lrrrrr}
\multicolumn{5}{c}{Classical GenEO}\\
\hline
$ \nu$ &  $\kappa$ &  $It$ &  $\#V_0$ &  $n_- $\\
\hline
  0.20 &  17.2 &    33 &     21 &   0 \\
  0.30 &  17.6 &    36 &     21 &   0 \\
  0.35 &  19.1 &    37 &     21 &   0 \\
  0.40 &  20.1 &    39 &     24 &   0 \\
  0.45 &  33.7 &    46 &     28 &   0 \\
  0.49 &  34.9 &    51 &     94 &   0 \\
\hline
\end{tabular}
\caption{\label{tab:case2-nuvaries}The influence of Poisson's ratio $\nu$ is studied when $E$ is constant and equal to $10^{11}$. The threshold is $\tau_\sharp = 0.05$. $\nu$: Poisson's ratio, $\kappa$: condition number of preconditioned operator, $It$: number of iterations, $\#V_0$: dimension of GenEO coarse space,  $n_- = \operatorname{rank}(\bA_-)$: dimension of second coarse space. Recall that classical GenEO is not algebraic.}
\end{table}

\paragraph{Influence of $E$}
The threshold is set back to $\tau_\sharp = 0.1$ and Poisson's ratio to $\nu = 0.3$ in all that follows. This time, the values $E_1$ and $E_2$ of Young's modulus in, respectively, the dark and light parts of $\omega$ in Figure~\ref{fig:E-solution} are varied. The results are in Table~\ref{tab:case3-Evaries}. We observe that all AWG condition numbers are between $8$ and $12.2$ so they are all very small and fast convergence is guaranteed. The smallest coarse space size (both for the GenEO coarse space and the second coarse space) is for the case where $E$ is constant throughout $\omega$. The cases where $E_1 > E_2$ (hard layers in softer material) require smaller coarse spaces than the cases where $E_2 > E_1$ (soft layers in harder material). Finally, the AWG coarse spaces are always larger than the (non algebraic) GenEO coarse spaces but not significantly in five cases out of seven.

\begin{table}
\centering
\begin{tabular}{lrrrr}
\multicolumn{5}{c}{AWG}\\
\hline
 $(E_1,E_2)$  &  $\kappa$ &  $It$ &  $\#V_0$ &  $n_- $\\
\hline
$(10^{5} ,  10^{11})$ &  10.8 &    22 &     95 &                 75 \\
$(10^{7} ,  10^{11})$ &  10.8 &    23 &     95 &                 75 \\
$(10^{9} ,  10^{11})$ &  10.4 &    24 &     94 &                 73 \\
$(10^{11} ,  10^{11})$ &  12.2 &    29 &     35 &                 19 \\
$(10^{11} ,  10^{9})$ &   8.0 &    26 &     59 &                 48 \\
$(10^{11} ,  10^{7})$ &   9.0 &    26 &     57 &                 48 \\
$(10^{11} ,  10^{5})$ &   8.4 &    29 &     57 &                 48 \\
\hline
\end{tabular}
\qquad
\begin{tabular}{lrrrr}
\multicolumn{5}{c}{Classical GenEO}\\
\hline
 $(E_1,E_2)$ &  $\kappa$ &  $It$ &  $\#V_0$ &  $n_- $\\
\hline
 $(10^{5} ,  10^{11})$ &   8.6 &    23 &     90 &                  0 \\
 $(10^{7} ,  10^{11})$ &   8.6 &    26 &     87 &                  0 \\
 $(10^{9} ,  10^{11})$ &   8.5 &    25 &     85 &                  0 \\
 $(10^{11} ,  10^{11})$ &  13.7 &    32 &     28 &                  0 \\
 $(10^{11} ,  10^{9})$ &  11.2 &    30 &     52 &                  0 \\
 $(10^{11} ,  10^{7})$ &  11.1&    29 &     55 &                  0 \\
 $(10^{11} ,  10^{5})$ &  12.7 &    30 &     55 &                  0 \\
\hline
\end{tabular}
\caption{\label{tab:case3-Evaries}The influence of $E$ and of the jump between $E_1$ and $E_2$ is studied. $(E_1, E_2)$: values of Young's modulus in the layers of coefficients, $\kappa$: condition number of preconditioned operator, $It$: number of iterations, $\#V_0$: dimension of GenEO coarse space,  $n_- = \operatorname{rank}(\bA_-)$: dimension of second coarse space. Recall that classical GenEO is not algebraic.} 
\end{table}

\paragraph{Influence of the accuracy of $\bW$} 

The second coarse space for the AWG preconditioners is computed by solving $n_-$ linear systems: $\bA_+ \backslash( {\bR\s}^\top\bv\s_-)$ for the vectors $\bv\s_-$ that correspond to a negative eigenvalue of $\bB\s$ ($s \in \llbracket 1 , N \rrbracket$). Until now, we have solved these with very high accuracy: the relative residual tolerance $rtol$ was set to $10^{-10}$. In this experiment we vary $rtol$. Table~\ref{tab:case5and8-rtolvaries} shows how increasing $rtol$ affects the condition number of $\bA$ preconditioned by AWG. Two cases have been studied with different Poisson's ratios: $\nu = 0.3$ and $\nu = 0.4$. Up to $rtol = 10^{-2}$ there is no change compared to $rtol = 10^{-10}$ and these intermediary results have not been reported in the table. In fact, up to $rtol = 0.5$, the condition number is hardly degraded. It must be kept in mind that this is a case with few subdomains and what we observe to be a small change in $\kappa$ could become more significant with more subdomains. Still, the conclusion is optimistic: the linear solves with $\bA_+$ do not need to be overly precise.

\begin{table}
\centering
\begin{tabular}{lrrrr}
\multicolumn{5}{c}{$\nu=0.3$}\\
\hline
 $rtol$  &  $\kappa$ &  $It$ &  $\#V_0$ &  $n_- $\\
\hline
  $10^{-10}$ &     9.0 &    26 &      57 &                 48 \\
  $10^{-2}$ &     9.0 &    27 &      57 &                 48 \\
  $0.05$    &    11.1 &    31 &      57 &                 48 \\
  $0.1$     &    12.2 &    32 &      57 &                 48 \\
  $0.5$     &   400.8 &    40 &      57 &                 48 \\
  $0.9$     &   706.8 &    64 &      57 &                 48 \\
\hline
\end{tabular}
\qquad
\begin{tabular}{lrrrr}
\multicolumn{5}{c}{$\nu=0.4$}\\
\hline
 $rtol$  &  $\kappa$ &  $It$ &  $\#V_0$ &  $n_- $\\
\hline
  $10^{-10}$ &      9.4 &    29 &     100 &                 74 \\
  $10^{-2}$ &       9.4 &    30 &     100 &                 74 \\
  $0.05$    &      12.0 &    33 &     100 &                 74 \\
  $0.1$     &      17.4 &    36 &     100 &                 74 \\
  $0.5$     &    1563.3 &    88 &     100 &                 74 \\
  $0.9$     &    2142.1 &   100 &     100 &                 74 \\
\hline
\end{tabular}
\caption{\label{tab:case5and8-rtolvaries}Influence of the accuracy of $rtol$ up to which the linear systems with $\bA_+$ preconditioned by $\bH_2$ are solved during the setup of the second coarse basis $\bW$. $rtol$: tolerance, $\kappa$: condition number of preconditioned operator, $It$: number of iterations, $\#V_0$: dimension of GenEO coarse space,  $n_- = \operatorname{rank}(\bA_-)$: dimension of second coarse space. Recall that classical GenEO is not algebraic.} 
\end{table}

\paragraph{Varying number of harder layers}

This time it is the number of layers of the harder coefficient that varies. The case with six layers is the usual one from \eqref{eq:distribE} represented in Figure~\ref{fig:E-solution}. The case with nine layers is obtained by also setting $E=10^{11}$ if $ (\operatorname{floor}(y) - y)  \in [5/7, 6/7] $. The case with three layers is obtained by setting $E=10^{11}$ only if $ (\operatorname{floor}(y) - y)  \in [1/7,2/7] $. Two additional cases with homogeneous hard and soft material are also considered. The results are shown in table~\ref{tab:case6-nbstripevaries}. 

We first observe that distributions of $E$ with more discontinuities require more coarse vectors. The AWG coarse space is always larger than the (non algebraic) classical GenEO coarse space but not significantly. We also check that homogeneous distributions of $E$ have the same behaviour for different values of $E$ and this is expected because $\bA$ and $\bb$ are linear in $E$.
 
\begin{table}
\centering
\begin{tabular}{lrrrr}
\multicolumn{5}{c}{AWG}\\
\hline
 $E$ &  $\kappa$ &  $It$ &  $\#V_0$ &  $n_- $\\
\hline
   $E=10^{11}$ &  12.2 &    29 &     35 &                 19 \\
   9 layers&   4.9 &    17 &     72 &                 72 \\
   6 layers&   9.0 &    26 &     57 &                 48 \\ 
   3 layers &   9.8 &    29 &     43 &                 25 \\
   $E=10^7$   &  12.2 &    29 &     35 &                 19 \\
\hline
\end{tabular}
\qquad
\begin{tabular}{lrrrr}
\multicolumn{5}{c}{Classical GenEO}\\
\hline
 $E$ &  $\kappa$ &  $It$ &  $\#V_0$ &  $n_- $\\
\hline
   $E=10^{11}$   &  13.7 &    32 &     28 &                  0 \\
   9 layers & 4.8 &    20 &       69 &                  0 \\
   6 layers &  11.1 &    29 &     55 &                  0 \\
   3 layer &   9.9 &    31 &     35 &                  0 \\
   $E=10^7$ &  13.7 &    32 &     28 &                  0 \\
\hline
\end{tabular}
\caption{\label{tab:case6-nbstripevaries}The number of layers of the harder coefficient varies. Two cases with homogeneous $E$ are also considered. $\kappa$: condition number of preconditioned operator, $It$: number of iterations, $\#V_0$: dimension of GenEO coarse space,  $n_- = \operatorname{rank}(\bA_-)$: dimension of second coarse space. Recall that classical GenEO is not algebraic.} 
\end{table}

\paragraph{Long domain with layers of coefficients}

\begin{figure}
\begin{center}
\includegraphics[width=0.7\textwidth, trim = 5cm 9cm 5cm 9cm, clip]{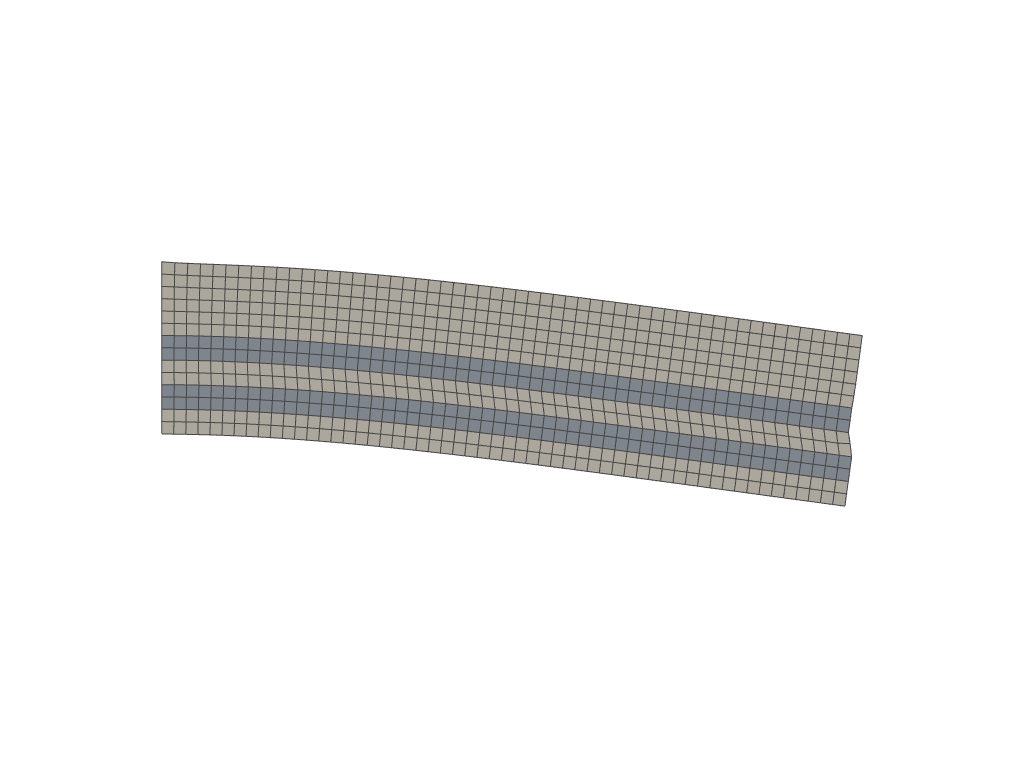} 
\end{center}
\caption{\label{fig:barre-E-solution}The computational domain $\omega$ has been displaced by $10^5$ multiplied by the solution $\bx_*$. The colors show the distribution of coefficients. Dark: $E = E_1 = 10^{11}$ - Light :  $E = E_2 = 10^{7}$.}
\end{figure}

This test case (represented in Figure~\ref{fig:barre-E-solution}) is often studied in domain decomposition articles and presentations. Its drawback is that it doesn't have any crosspoints (degrees of freedom that belong to more than two subdomains) but all simulations up until now had crosspoints and they don't appear to be an issue for AWG preconditioners. In this paragraph, weak scalability is studied. The parameters in the test case are: 
\begin{itemize}
\item $N \in \{2;4;8;15;29\}$ (number of unit-square subdomains),
\item $\omega = [N,1]$ (computational domain parametrized by number of subdomains), 
\item $h = 1/14$ (mesh size),
\item $\nu = 0.3$ (Poisson's ratio),
\item 
$
E = \left\{\begin{array}{l}
E_1 = 10^{11} \text{ if } y \in [1/7;2/7] \cup [3/7; 4/7], 
\\E_2 = 10^7 \text{ otherwise},
\end{array}
 \right. 
$
(Young's modulus),
\item $\bH_{3,\mathrm{ad}} \text{ with } \bH_2 = \bH_{\mathrm{hyb}}\NN(0.1)$ (preconditioner),
\item $rtol=10^{-10}$ (relative residual tolerance for the linear solves with $\bA_+$ and the linear solve with $\bA$). 
\end{itemize}

The results are shown in Table~\ref{tab:case7-Nvaries}. As predicted theoretically, with the AWG preconditioner the condition number hardly increases with the number of subdomains and this points towards weak scalability. For all values of $N$, the subdomains are identical (with a difference between ones that are at the edge of $\omega$ and others) so the coarse space grows almost linearly with the number of subdomains. This can be viewed as a first weak scalability result. 

\begin{table}
\centering
\begin{tabular}{lrrrr}
\multicolumn{5}{c}{AWG}\\
\hline
 $N$ &  $\kappa$ &  $It$ &  $\#V_0$ &  $n_- $\\
\hline
  2 &  12.6 &    15 &      8 &     8 \\
  4 &   9.8 &    16 &     26 &    20 \\
  8 &   9.0 &    15 &     62 &    44 \\
 15 &   8.8 &    15 &    125 &    86 \\
 29 &   8.7 &    17 &    251 &   170 \\

\hline
\end{tabular}
\qquad
\begin{tabular}{lrrrr}
\multicolumn{5}{c}{Classical GenEO}\\
\hline
 $N$ &  $\kappa$ &  $It$ &  $\#V_0$ &  $n_- $\\
\hline
   2 &   9.5 &  15 &      7 &  0 \\
   4 &  11.9 &  19 &     19 &  0 \\
   8 &  12.6 &  23 &     43 &  0 \\
  15 &  12.8 &  27 &     85 &  0 \\
  29 &  12.8 &  28 &    169 &  0 \\
\hline
\end{tabular}
\caption{\label{tab:case7-Nvaries}The number $N$ of subdomains increases, the problem size is proportional to the number of subdomains. Weak scalable behaviour would be for the number of iterations to remain constant and this is what is observed. $N$: number of subdomains, $\kappa$: condition number of preconditioned operator, $It$: number of iterations, $\#V_0$: dimension of GenEO coarse space,  $n_- = \operatorname{rank}(\bA_-)$: dimension of second coarse space. Recall that classical GenEO is not algebraic.}
\end{table}

\section{Conclusion}
In this article new preconditioners called AWG for Algebraic Woodbury-GenEO have been introduced. Combined with PCG, they are algebraic domain decomposition methods with two coarse spaces. Convergence in a small number of iterations can be guaranteed by adjusting some user chosen thresholds and enriching one of the coarse spaces accordingly. Numerical results have been presented as a proof of concept and to illustrate the behaviour of the AWG preconditioners on some simple test cases. Further numerical simulations must be performed to assess the overall efficiency of the AWG preconditioners. Some possible improvements of the AWG preconditioners are still under investigation. This includes decreasing the size $n_-$ of the second coarse space by proposing other choices of $\bB\s$, finding sparser approximations for the second coarse space and coarse solve, and perhaps, injecting some information into the preconditioner like the near kernel of $\bA$ in a way inspired by smoothed aggregation multigrid \cite{van2001convergence}. 

\bibliographystyle{abbrv}
\bibliography{AlgebraicGenEO}
\end{document}